\newcommand{\Q}{\mathbb{Q}}
\newcommand{\R}{\mathbb{R}}
\newcommand{\C}{\mathbb{C}}
\newcommand{\Z}{\mathbb{Z}}
\newcommand{\N}{\mathbb{N}}
\newcommand{\CP}{\C\mathrm{P}}
\newcommand{\smooth}{{\mathscr{C}^\infty}}
\newcommand{\End}{\mathrm{End}}
\newcommand{\n}{\nabla}
\newcommand*{\rom}[1]{\expandafter\@slowromancap\romannumeral #1@}
\DeclareMathOperator{\Id}{Id}
\DeclareMathOperator{\tr}{Tr}
\newtheorem{prop}{Proposition}[section]
\newtheorem{thm}[prop]{Theorem}
\newtheorem*{thmp}{Theorem 0.1'}
\theoremstyle{definition}
\newtheorem{defn}[prop]{Definition}
\theoremstyle{remark}
\numberwithin{equation}{section}
\title[Bismut-Lott torsion and Igusa-Klein torsion]{A comparison between the Bismut-Lott torsion \\ and the Igusa-Klein torsion}
\date{\today}
\author{Martin PUCHOL}
\address{Universit{\'e} Paris-Saclay,
CNRS,
Laboratoire de math{\'e}matiques d'Orsay,
F-91405 Orsay Cedex, France}
\email{martin.puchol@math.cnrs.fr}
\author{Yeping ZHANG}
\address{School of Mathematics,
Korea Institute for Advanced Study,
Hoegiro 85, Dongdaemungu,
Seoul 02455, Korea}
\email{ypzhang@kias.re.kr}
\author{Jialin ZHU}
\address{Mathematical Science Research Center,
Chongqing University of Technology,
No. 69 Hongguang Road,
Chongqing 400054, China}
\email{jialinzhu@cqut.edu.cn}
\begin{document}

\begin{abstract}
We consider a fibration with compact fiber together with a unitarily flat complex vector bundle over the total space.
Under the assumption that the fiberwise cohomology admits a filtration with unitary factors,
we construct Bismut-Lott analytic torsion classes.
The analytic torsion classes obtained satisfy Igusa's and Ohrt's axiomatization of higher torsion invariants.
As a consequence,
we obtain a higher version of the Cheeger-M{\"u}ller/Bismut-Zhang theorem:
for trivial flat line bundles,
the Bismut-Lott analytic torsion classes coincide with the Igusa-Klein higher topological torsions up to a normalization.
\end{abstract}

\maketitle

\tableofcontents

\section{Introduction}

We consider a unitarily flat complex vector bundle $(F,\n^F)$ over a closed manifold $X$ whose cohomology with coefficients in $F$ vanishes,
i.e., $H^\bullet(X,F)=0$.
Franz \cite{fr}, Reidemeister \cite{rei} and de Rham \cite{dr}
constructed a topological invariant associated with $(F,\n^F)$,
known as the Reidemeister-Franz topological torsion (RF-torsion).
RF-torsion is the first algebraic-topological invariant
which distinguishes certain homotopy-equivalent topological spaces \cite{fr,rei}.
RF-torsion could be extended to the case $H^\bullet(X,F)\neq 0$ \cite{dr,mil,wh}.
The construction of RF-torsion is based on the complex of simplicial chains in $X$ with values in $F$.

By replacing the complex of simplicial chains by the de Rham complex,
Ray and Singer \cite{rs}
obtained an analytic version of RF-torsion,
known as the Ray-Singer analytic torsion (RS-torsion).
In the same paper,
Ray and Singer conjectured that
RF-torsion and RS-torsion are equivalent.

Ray-Singer conjecture was proved independently by Cheeger \cite{c-cm} and M{\"u}ller \cite{m-cm}.
Their result is now known as the Cheeger-M{\"u}ller theorem.
Bismut, Zhang and M{\"u}ller simultaneously considered its extension.
M{\"u}ller \cite{m-cm-2} extended the Cheeger-M{\"u}ller theorem to the unimodular case,
i.e., the induced metric on the determinant line bundle $\det F$ is flat.
Bismut and Zhang \cite{bz} extended the Cheeger-M{\"u}ller theorem to arbitrary flat complex vector bundle.
There are also various extensions to equivariant cases
\cite{bz2,lr,lu}.

Wagoner \cite{wa} conjectured that
RF-torsion and RS-torsion can be extended to invariants of a fiber bundle,
i.e., a fibration $M\rightarrow S$
together with a flat complex vector bundle $(F,\n^F)$ over $M$.

Bismut and Lott \cite{bl} confirmed the analytic side of Wagoner's conjecture.
They extended RS-torsion to analytic torsion forms (BL-torsion).
Indeed,
Bismut and Lott proved a Riemann-Roch-Grothendieck type formula (RRG formula) for flat vector bundles.
The analytic torsion forms come as a refinement of their RRG formula at the level of differential forms.
Their work makes the analytic torsion into a consequence of the local index theory.

Inspired by the work of Bismut and Lott,
Igusa \cite{ig} confirmed the topological side of Wagoner's conjecture by constructing higher topological torsions,
known as the Igusa-Klein higher topological torsion (IK-torsion).
Goette, Igusa and Williams \cite{g-i-w,g-i} used IK-torsion to detect the exotic smooth structure of fiber bundles.
Dwyer, Weiss and Williams \cite{dww} constructed another version of higher topological torsion (DWW-torsion).

The relation among these higher torsions is a natural and important research topic.
We expect a higher version of the Cheeger-M{\"u}ller/Bismut-Zhang theorem in full generality.

By extending the proof of the Bismut-Zhang theorem \cite{bz},
Bismut and Goette \cite{bg} established a higher Cheeger-M{\"u}ller/Bismut-Zhang theorem
under the assumption that
there exist a fiberwise Morse function $f: M \rightarrow \R$ and a fiberwise Riemannian metric
such that the fiberwise gradient of $f$ is Morse-Smale \cite{sm}.
Goette \cite{g-a1,g-a2} extended the results in \cite{bg} to arbitrary fiberwise Morse functions.
Bismut and Goette \cite{bg} also extended BL-torsion to the equivariant case.
And there are related works \cite{bg2,bu}.
We refer to the survey by Goette \cite{g} for an overview on higher torsion invariants.
Goette also proposed a program extending the argument in \cite{g-a1,g-a2}
to functions with both non-degenerate critical points and birth-death critical points.

An alternative approach to the higher Cheeger-M{\"u}ller/Bismut-Zhang theorem is based on Igusa's work \cite{ig2}.
Igusa axiomatized higher torsion invariants of fibrations (equipped with trivial flat complex line bundles).
He considered a fibration $M \rightarrow S$ with closed oriented fiber $Z$ such that $H^\bullet(Z)$ is unipotent.
He stated two axioms,
called the additivity axiom and the transfer axiom,
and showed that an invariant of $M/S$ satisfies the axioms if and only if
it is a linear combination of IK-torsion and the higher Miller-Morita-Mumford class \cite{miller,mor,mum} of $M/S$.
Badzioch, Dorabiala, Klein and Williams \cite{bdkw} showed that DWW-torsion satisfies Igusa's axioms.

One of the key results in this paper states that BL-torsion satisfies Igusa's axioms.
As a consequence,
in Theorem \ref{thm-bg},
we establish a higher Cheeger-M{\"u}ller/Bismut-Zhang theorem for fibrations equipped with trivial flat complex line bundles.
The proof is based on the results of Ma \cite{ma} and ours \cite{pzz2}.
Ma's result \cite{ma},
which describes the behavior of BL-torsion under the composition of submersions,
shows that BL-torsion satisfies the transfer axiom.
Our result \cite{pzz2},
which describes the behavior of BL-torsion under gluing,
shows that BL-torsion satisfies the additivity axiom.
Our result \cite{pzz2} is a higher version of the gluing formula obtained by Br{\"u}ning and Ma \cite{bm}.
And there are related results \cite{lu,pzz,vi,imrn-zhu,israel-zhu}.
In particular,
in \cite{pzz},
we gave a purely analytic proof of \cite{bm}.
The technique applied in \cite{pzz} is closely related to \cite{pzz2}.

Ohrt \cite{ohrt} axiomatized higher torsion invariants of certain fibrations equipped with unitarily flat complex vector bundles.
He considered a smooth manifold $S$ whose fundamental group $\pi_1(S)$ is finite,
a fibration $M \rightarrow S$ with simple closed oriented fiber $Z$,
and a unitarily flat complex vector bundle $F$ over $M$ with finite holonomy group such that $H^\bullet(Z,F)$ is unipotent.
He showed that an invariant of $(M/S,F)$ satisfies his axioms
if and only if it is a linear combination of IK-torsion and the higher Miller-Morita-Mumford class of $(M/S,F)$.

In this paper,
we also show that BL-torsion satisfies Ohrt's axioms.
Moreover,
under the same assumptions as in \cite{ohrt},
we establish a higher Cheeger-M{\"u}ller/Bismut-Zhang theorem.

Let us now give more details about the matter of this paper.

\vspace{5mm}

\noindent\textbf{Odd characteristic form and torsion form.}
Let $M$ be a smooth manifold.
Let $(F,\n^F)$ be a flat complex vector bundle over $M$ with flat connection $\n^F$.
Let $g^F$ be a Hermitian metric on $F$.
Let $\overline{F}^*$ be the bundle of antilinear functionals on $F$.
We will view $g^F$ as a map from $F$ to $\overline{F}^*$.
Following \cite[(4.1)]{bz} and \cite[(1.31)]{bl},
we define
\begin{equation}
\label{intro-def-omegaF}
\omega(F,g^F) = \big(g^F\big)^{-1} \n^F g^F
\in \Omega^1(M,\mathrm{End}(F)) \;.
\end{equation}
We fix a square root of $i$,
denoted by $i^{1/2}$.
In what follows,
the choice of square root will be irrelevant.
Let $\varphi: \Omega^\bullet(M) \rightarrow \Omega^\bullet(M)$ be such that
\begin{equation}
\label{eq-def-varphi}
\varphi \omega = (2\pi i)^{-k/2}\omega \hspace{2.5mm} \text{for }\omega\in\Omega^k(M) \;.
\end{equation}
Let $f$ be an odd polynomial,
i.e., $f(-z)=-f(z)$.
Following \cite[(1.34)]{bl},
we define
\begin{equation}
f\big(\n^F,g^F\big) = (2\pi i)^{1/2} \varphi \tr\Big[f\Big(\frac{\omega(F,g^F)}{2}\Big)\Big]
\in \Omega^\mathrm{odd}(M) \;.
\end{equation}
Bismut and Lott \cite[\textsection I]{bl} showed that $f\big(\n^F,g^F\big)$ is a closed real form
and its cohomology class
\begin{equation}
f\big(\n^F\big) := \big[f\big(\n^F,g^F\big)\big]\in H^\mathrm{odd}(M)
\end{equation}
is independent of $g^F$.
For a graded flat complex vector bundle
$\big(F^\bullet = \bigoplus_k F^k,\n^{F^\bullet} = \bigoplus_k \n^{F^k}\big)$
and a Hermitian metric $g^{F^\bullet} = \bigoplus_k g^{F^k}$ on $F^\bullet$,
we denote
\begin{align}
\begin{split}
\label{intro-fsum}
f\big(\n^{F^\bullet},g^{F^\bullet}\big) & = \sum_k (-1)^k f\big(\n^{F^k},g^{F^k}\big)
\in \Omega^\mathrm{odd}(M) \;,\\
f\big(\n^{F^\bullet}\big) & = \sum_k (-1)^k f\big(\n^{F^k}\big)
\in H^\mathrm{odd}(M) \;.
\end{split}
\end{align}
If $f$ is an odd formal power series,
the constructions above still make sense.
In this paper,
we always take
\begin{equation}
f(z) = ze^{z^2} \;.
\end{equation}

Now let
\begin{equation}
\label{intro-E}
\big(E^\bullet,\n^{E^\bullet},\partial\big) : \; 0 \rightarrow E^0 \rightarrow \cdots \rightarrow E^n \rightarrow 0
\end{equation}
be an exact sequence of flat complex vector bundles over $M$.
More precisely,
$(E^\bullet,\partial)$ is an exact sequence of complex vector bundles over $M$,
and $\partial$ commutes with the flat connection $\n^{E^\bullet}$.
By \cite[Thm. 2.19]{bl},
we have
\begin{equation}
\label{intro-f0}
f\big(\n^{E^\bullet}\big) = 0 \in H^\mathrm{odd}(M) \;.
\end{equation}
Let $g^{E^\bullet} = \bigoplus_k g^{E^k}$ be a Hermitian metric on $E^\bullet$.
The torsion form \cite[Def. 2.20]{bl} is a real even differential form $\mathscr{T}\big(\n^{E^\bullet},\partial,g^{E^\bullet}\big)$ on $M$ satisfying
\begin{equation}
\label{intro-tf}
d \mathscr{T}\big(\n^{E^\bullet},\partial,g^{E^\bullet}\big) = f\big(\n^{E^\bullet}, g^{E^\bullet}\big) \;.
\end{equation}
We remark that \eqref{intro-f0} and \eqref{intro-tf} can be extended to the case where \eqref{intro-E} is not necessarily exact \cite[\textsection II]{bl}.

\vspace{5mm}

\noindent\textbf{R.R.G. formula and analytic torsion form.}
Let
\begin{equation}
\pi: M \rightarrow S
\end{equation}
be a fibration with closed fiber $Z$.
Let $o(TZ)$ be the orientation line of the fiberwise tangent bundle $TZ$.
Let
\begin{equation}
e(TZ)\in H^{\dim Z}\big(M,o(TZ)\big)
\end{equation}
be the Euler class of $TZ$ (see \cite[(3.17)]{bz}).

Let
\begin{equation}
(F,\n^F)
\end{equation}
be a flat complex vector bundle over $M$.
Let $H^\bullet(Z,F)$ be the fiberwise cohomology of $Z$ with coefficients in $F$.
Then $H^\bullet(Z,F)$ is a graded complex vector bundle over $S$
equipped with a canonical flat connection $\n^{H^\bullet(Z,F)}$ (see \cite[Def. 2.4]{bl}).
Bismut and Lott \cite[Thm. 3.17]{bl} established the following Riemann-Roch-Grothendieck type formula
\begin{equation}
\label{intro-RRG}
f\big(\n^{H^\bullet(Z,F)}\big) = \int_Z e(TZ) f\big(\n^F\big) \in H^\mathrm{odd}(S) \;.
\end{equation}

Now let $T^HM\subseteq TM$ be a complement of $TZ$,
let $g^{TZ}$ be a Riemannian metric on $TZ$,
let $g^F$ be a Hermitian metric on $F$.
Let $\n^{TZ}$ be the Bismut connection \cite[Def. 1.6]{b} (see also \cite[Thm. 1.1]{b97} and \cite[Prop. 10.2]{bgv}) on $TZ$ associated with $T^HM$ and $g^{TZ}$.
Let
\begin{equation}
e\big(TZ,\n^{TZ}\big) \in \Omega^{\dim Z}\big(M,o(TZ)\big)
\end{equation}
be the Euler form (see \cite[(3.17)]{bz}).
Let $g^{H^\bullet(Z,F)}$ be the $L^2$-metric on $H^\bullet(Z,F)$ associated with $g^{TZ},g^F$.
Bismut and Lott \cite[Def. 3.22]{bl} constructed a real even differential form $\mathscr{T}\big(T^HM,g^{TZ},g^F\big)$ on $S$ such that
\begin{equation}
\label{intro-dT}
d\mathscr{T}\big(T^HM,g^{TZ},g^F\big) =
\int_Z e\big(TZ,\n^{TZ}\big) f\big(\n^F,g^F\big) - f\big(\n^{H^\bullet(Z,F)},g^{H^\bullet(Z,F)}\big) \;.
\end{equation}
We call $\mathscr{T}\big(T^HM,g^{TZ},g^F\big)$ the Bismut-Lott analytic torsion form.
Zhu \cite[\textsection 2]{imrn-zhu} extended the R.R.G. formula \eqref{intro-RRG} as well as the analytic torsion form to the case
where $Z$ is a compact manifold with boundaries and $T^HM,g^{TZ},g^F$ are product on a tubular neighborhood of the boundary (see \cite[(2.30)-(2.35)]{imrn-zhu}).

\vspace{5mm}

\noindent\textbf{Torsion classes.}
For a smooth manifold $S$,
there is a canonical bijection
\begin{align}
\begin{split}
& \big\{ \text{flat complex vector bundles over } S \big\}/\text{isomorphism} \\
& \simeq \big\{ \text{linear representations of } \pi_1(S) \big\}/\text{conjugation} \;.
\end{split}
\end{align}
Let $(E,\n^E)$ be flat complex vector bundle over $S$.
If $(E,\n^E)$ corresponds to a unitary representation of $\pi_1(S)$,
we call $(E,\n^E)$ a unitarily flat complex vector bundle,
and we sometimes simply say that $(E,\n^E)$ is unitary.
A flat complex vector bundle is unitary if and only if it admits a flat Hermitian metric.
For a unitarily flat complex vector bundle $(E,\n^E)$ and two flat Hermitian metrics $g^E,{g^E}'$ on $E$,
we can find an automorphism of flat complex vector bundle $\phi: E \rightarrow E$ such that ${g^E}' = \phi^*g^E$.

Let $(E,\n^E)$ be a flat complex vector bundle.
If there is a filtration by flat subbundles
\begin{equation}
E = E_r \supseteq E_{r-1} \supseteq \cdots \supseteq E_0 = 0
\end{equation}
such that $E_j/E_{j-1}$,
equipped with the flat connection induced by $\n^E$,
is unitary for each $j$,
we say that $(E,\n^E)$ is filtered by flat subbundles with unitary factors.
Moreover,
if $E_j/E_{j-1}$ is a trivial flat complex line bundle for each $j$,
we say that $(E,\n^E)$ is unipotent.

\textbf{From now on,
we always assume that $(F,\n^F)$ is unitary,
and $H^\bullet(Z,F)$ is filtered by flat subbundles with unitary factors.}
Let $g^F$ be a flat Hermitian metric on $F$.
In this paper,
we introduce a closed form (see \eqref{eq-def-tcl} and \eqref{eq-def-dtcl})
\begin{equation}
\label{intro-Tcl}
\mathscr{T}_\mathrm{cl}\big(T^HM,g^{TZ},g^F\big)
= \mathscr{T}^{[>0]}\big(T^HM,g^{TZ},g^F\big) + \text{correction terms} \in \Omega^{\mathrm{even}\geqslant 2}(S) \;,
\end{equation}
where $\mathscr{T}^{[>0]}\big(T^HM,g^{TZ},g^F\big)$ consists of the components of $\mathscr{T}\big(T^HM,g^{TZ},g^F\big)$ of positive degree.
The correction terms are torsion forms of certain exact sequences induced by a filtration of $H^\bullet(Z,F)$ with unitary factors,
which were introduced by Ma \cite[Def. 3.1]{ma}.
The Bismut-Lott analytic torsion class (see Definition \ref{def-tclass}) is defined as
\begin{equation}
\label{intro-taubl}
\tau^\mathrm{BL}(M/S,F) := \Big[ \mathscr{T}_\mathrm{cl}\big(T^HM,g^{TZ},g^F\big) \Big] \in H^{\mathrm{even}\geqslant 2}(S) \;.
\end{equation}
We will see that $\tau^\mathrm{BL}(M/S,F)$ is uniquely determined by the fibration $\pi: M \rightarrow S$ and the flat complex vector bundle $(F,\n^F)$.
One of the many reasons to drop the component of degree zero in \eqref{intro-Tcl} is to make $\tau^\mathrm{BL}(M/S,F)$ independent of $g^F$.
Goette \cite[Def. 2.8]{g} constructed $\tau^\mathrm{BL}(M/S,F)$ for $H^\bullet(Z,F)$ unitary.

\textbf{We further assume that $H^\bullet(Z,F)$ is unipotent,
and the fiber of $\pi: M \rightarrow S$ is oriented,
i.e., the fiberwise tangent bundle $TZ$ is oriented.}
Igusa \cite{ig} constructed a higher topological torsion
\begin{equation}
\label{intro-tauig}
\tau^\mathrm{IK}(M/S,F) \in H^\mathrm{even}(S) \;,
\end{equation}
which we call the Igusa-Klein higher topological torsion.

\vspace{5mm}

\noindent\textbf{Higher Cheeger-M{\"u}ller/Bismut-Zhang theorem for trivial flat complex line bundles.}
Let $\pi: M\rightarrow S$ be a smooth fibration with closed oriented fiber.
Let $\mathbb{1}$ be the trivial flat complex line bundle over $M$.

We assume that $H^\bullet(Z) = H^\bullet(Z,\mathbb{1})$ is unipotent.

We denote by
\begin{equation}
\label{intro-k}
\tau^\mathrm{BL}_k(M/S,\mathbb{1}), \; \tau^\mathrm{IK}_k(M/S,\mathbb{1}) \in H^{2k}(S)
\end{equation}
the components of $\tau^\mathrm{BL}(M/S,\mathbb{1})$ and $\tau^\mathrm{IK}(M/S,\mathbb{1})$ of degree $2k$.

Let $\zeta(\cdot)$ be the Riemann zeta function.

The following theorem confirms the higher Cheeger-M{\"u}ller/Bismut-Zhang theorem proposed by Goette \cite[Thm. 5.5]{g} with $F = \mathbb{1}$.

\begin{thm}
\label{thm-bg}
For any positive integer $k$
and any smooth fibration $\pi: M\rightarrow S$ with closed oriented fiber $Z$ such that $H^\bullet(Z)$ is unipotent,
we have
\begin{align}
\label{eq-thm-bg}
\begin{split}
& \frac{2^{4k}\big((2k)!\big)^2}{(4k+1)!} \tau^\mathrm{BL}_{2k}(M/S,\mathbb{1}) \\
& = - \frac{(2k)!}{(2\pi)^{2k}} \tau^\mathrm{IK}_{2k}(M/S,\mathbb{1}) + \frac{\zeta'(-2k)}{2} \Big[\int_Z e(TZ)\mathrm{ch}(TZ) \Big]^{[4k]}  \in H^{4k}(S) \;.
\end{split}
\end{align}
\end{thm}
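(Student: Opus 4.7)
The plan is to invoke Igusa's uniqueness theorem from \cite{ig2}: any invariant of fibrations $\pi:M\to S$ with closed oriented fiber $Z$ and unipotent $H^\bullet(Z)$ that satisfies the additivity axiom and the transfer axiom must be of the form $\alpha\, \tau^{\mathrm{IK}}(M/S,\mathbb{1}) + \beta\,[\int_Z e(TZ)\mathrm{ch}(TZ)]$. I would first verify that $\tau^{\mathrm{BL}}(M/S,\mathbb{1})$ as defined in \eqref{intro-taubl} satisfies both axioms, concluding in degree $4k$ that
\[
\tau^{\mathrm{BL}}_{2k}(M/S,\mathbb{1}) = \alpha_k\, \tau^{\mathrm{IK}}_{2k}(M/S,\mathbb{1}) + \beta_k\, \Big[\int_Z e(TZ)\mathrm{ch}(TZ)\Big]^{[4k]}
\]
for constants $\alpha_k, \beta_k$ depending only on $k$, and then determine these constants by testing on model fibrations.

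\textbf{Verification of the axioms.} The additivity axiom describes how the torsion transforms when the fiber $Z$ is cut along a fiberwise codimension-zero hypersurface. This is exactly the regime of the gluing theorem proved by the authors in \cite{pzz2}, which extends Br\"uning-Ma \cite{bm} to the family setting. I would apply \cite{pzz2} to the Bismut-Lott torsion form $\mathscr{T}(T^HM,g^{TZ},g^F)$ with $F = \mathbb{1}$ and then track the correction terms in \eqref{intro-Tcl} carefully, using that filtrations of $H^\bullet(Z,\mathbb{1})$ restrict compatibly to the pieces of the decomposition. The transfer axiom describes the behavior of the torsion under a tower $M\to N\to S$ of fibrations; this is exactly Ma's composition formula \cite{ma}, which yields the corresponding identity for $\tau^{\mathrm{BL}}$ after again organizing the correction terms.

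\textbf{Pinning down the constants.} Once the abstract form of the identity is established, $\alpha_k$ and $\beta_k$ are detected on two independent test cases. The natural one is the fiberwise even-dimensional sphere bundle $\pi:S\times S^{2k}\to S$ (and, to separate the two invariants, a nontrivial $S^{2k}$-bundle over $S^{4k}$ of Igusa-Klein type). On these, $\tau^{\mathrm{IK}}_{2k}$ is given by Igusa's explicit formulas \cite{ig}, while $\tau^{\mathrm{BL}}_{2k}$ can be computed directly from the Ray-Singer analytic torsion of the round sphere, whose zeta-function computation produces the factor $\zeta'(-2k)$. Matching the coefficient of $[\int_Z e(TZ)\mathrm{ch}(TZ)]^{[4k]}$ yields $\beta_k$, and matching the IK-component yields the combinatorial prefactor $2^{4k}((2k)!)^2/(4k+1)!$ coming from the Bismut-Lott normalization $f(z)=ze^{z^2}$ versus the normalization implicit in Igusa's axiomatic invariant.

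\textbf{Main obstacle.} The principal difficulty is not the axiomatic step itself but the passage from torsion \emph{forms} to torsion \emph{classes}. The gluing identity of \cite{pzz2} and the composition identity of \cite{ma} are statements at the level of forms modulo exact differentials, and they involve auxiliary objects (the horizontal subbundle $T^HM$, the metrics $g^{TZ},g^F$, and especially a filtration of the fiberwise cohomology with unitary factors) that need not glue or compose tautologically. I expect the technical heart of the proof to be the verification that the correction terms entering \eqref{intro-Tcl} are compatible with these two operations, so that the resulting identity in $H^{\mathrm{even}\ge 2}(S)$ literally matches the shape of Igusa's axioms. A secondary but non-trivial point is the sphere-bundle computation: aligning the Bismut-Lott normalization with Igusa's axiomatic normalization at each degree $2k$ is what produces the exact numerical constants appearing in \eqref{eq-thm-bg}.
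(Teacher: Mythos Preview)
Your overall strategy matches the paper's: verify that $\tau^{\mathrm{BL}}$ satisfies Igusa's additivity and transfer axioms (via \cite{pzz2} and \cite{ma} respectively, after passing to classes), invoke \cite[Cor.~4.5]{ig2}, and fix the constants on test fibrations. The gap is in the last step. The product bundle $S\times S^{2k}\to S$ gives no information (all three terms in \eqref{eq-thm-bg} vanish there), and on \emph{any} bundle with even-dimensional closed oriented fiber and $F=\mathbb{1}$ the Bismut--Lott torsion form vanishes identically by Poincar\'e duality \cite[Thm.~3.26]{bl}. Hence $\tau^{\mathrm{BL}}_{2k}=0$ on every $S^{2k}$-bundle, no ``zeta-function computation of the round sphere'' can produce $\zeta'(-2k)$ on the BL side, and even-dimensional test cases alone cannot separate the two constants because $\tau^{\mathrm{IK}}_{2k}$ and the Miller--Morita--Mumford class are proportional there (see \eqref{intro-bg0}). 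You need an odd-dimensional test case.

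The paper determines $a_{2k}$ from \emph{circle} bundles: \cite[Cor.~4.14]{bl} and the Igusa--Klein computation \cite{ig4} give explicit polylogarithm formulas for $\tau^{\mathrm{BL}}$ and $\tau^{\mathrm{IK}}$ on $\mathbb{S}^1(L)\to S$ (Theorems~\ref{thm-circle} and~\ref{thm2-circle}), and comparing them yields $a_{2k}=-(2k)!/(2\pi)^{2k}$. For the even-fiber constant the paper uses $\mathbb{S}^2$-bundles, where the Poincar\'e-duality vanishing forces $b_{2k}=0$ in the split formulation (Theorem~0.1$'$). The factor $\zeta'(-2k)/2$ enters only when one rewrites Theorem~0.1$'$ as Theorem~\ref{thm-bg}, via Igusa's formula \eqref{intro-bg0} together with the functional-equation identity $\zeta(2k+1)=(-1)^k 2^{2k+1}\pi^{2k}\zeta'(-2k)/(2k)!$; it comes from the IK side, not from any analytic-torsion computation.
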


Here the coefficient on the left hand side of \eqref{eq-thm-bg} is exactly the Chern normalization introduced by Bismut and Goette \cite[Def. 2.37]{bg}.
And the second term on the right hand side of \eqref{eq-thm-bg} is a special case of the characteristic class $^0\!J(\cdot,\cdot)$ constructed by Bismut and Goette \cite[Def. 7.3]{bg}
(see also \cite[(3.20)]{g}).
In \cite[\textsection 3.3]{ig2},
this term is called the higher Miller-Morita-Mumford class \cite{miller,mor,mum}.

We remark that
\begin{equation}
\tau^\mathrm{BL}_{2k+1}(M/S,\mathbb{1}) = 0 \in H^{4k+2}(S) \;.
\end{equation}
This is due to Bismut and Lott \cite{bl} (see also Theorem \ref{thm-trivial}).

By \cite[Cor. 4.8]{ig2},
if the fiber of $M/S$ is even-dimensional,
then
\begin{equation}
\label{intro-bg0}
\tau^\mathrm{IK}_{2k}(M/S,\mathbb{1}) =  \frac{(-1)^k\zeta(2k+1)}{4}\Big[\int_Z e(TZ)\mathrm{ch}(TZ) \Big]^{[4k]} \;.
\end{equation}
By \eqref{intro-bg0} and the fact that
\begin{equation}
\zeta(2k+1) = \frac{(-1)^k2^{2k+1}\pi^{2k}}{(2k)!}\zeta'(-2k) \;,
\end{equation}
Theorem \ref{thm-bg} is equivalent to the following theorem.

\begin{thmp}
Under the same assumptions as in Theorem \ref{thm-bg},
if $Z$ is odd-dimensional,
then
\begin{equation}
\frac{2^{4k}\big((2k)!\big)^2}{(4k+1)!} \tau^\mathrm{BL}_{2k}(M/S,\mathbb{1})
= - \frac{(2k)!}{(2\pi)^{2k}} \tau^\mathrm{IK}_{2k}(M/S,\mathbb{1}) \in H^{4k}(S) \;,
\end{equation}
if $Z$ is even-dimensional,
then
\begin{equation}
\tau^\mathrm{BL}_{2k}(M/S,\mathbb{1})= 0 \in H^{4k}(S) \;.
\end{equation}
\end{thmp}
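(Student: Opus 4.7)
The plan is to deduce Theorem 0.1$'$ directly from Theorem \ref{thm-bg}, using the excerpt's two auxiliary ingredients: Igusa's formula \eqref{intro-bg0} for $\tau^{\mathrm{IK}}_{2k}$ on even-dimensional fibers, and the functional equation $\zeta(2k+1)=\frac{(-1)^k 2^{2k+1}\pi^{2k}}{(2k)!}\zeta'(-2k)$ of the Riemann zeta function. The argument splits according to the parity of $\dim Z$, and in each case reduces to an elementary manipulation of the right-hand side of \eqref{eq-thm-bg}.

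When $\dim Z$ is odd, the Chern-Weil Euler form $e(TZ,\n^{TZ})$ is the Pfaffian of the curvature of the metric Bismut connection, and the Pfaffian of a skew-symmetric matrix of odd size vanishes; hence $e(TZ,\n^{TZ})=0$ as a differential form and a fortiori $\int_Z e(TZ)\mathrm{ch}(TZ)=0$ in $H^\bullet(S)$. The second summand on the right-hand side of \eqref{eq-thm-bg} drops out, and Theorem \ref{thm-bg} immediately reduces to the first equation of Theorem 0.1$'$.

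When $\dim Z$ is even, I substitute \eqref{intro-bg0} into the first summand of the right-hand side of \eqref{eq-thm-bg}, obtaining
\[
-\frac{(2k)!}{(2\pi)^{2k}}\cdot\frac{(-1)^k\zeta(2k+1)}{4}\Big[\int_Z e(TZ)\mathrm{ch}(TZ)\Big]^{[4k]}.
\]
Applying the functional equation, the scalar prefactor collapses to $-\zeta'(-2k)/2$, which exactly cancels the second summand $\frac{\zeta'(-2k)}{2}\big[\int_Z e(TZ)\mathrm{ch}(TZ)\big]^{[4k]}$ on the right-hand side. Hence the whole right-hand side of \eqref{eq-thm-bg} vanishes, and since the coefficient $2^{4k}\big((2k)!\big)^2/(4k+1)!$ on the left is nonzero, I conclude $\tau^{\mathrm{BL}}_{2k}(M/S,\mathbb{1})=0$.

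The only genuinely nontrivial inputs are Theorem \ref{thm-bg} itself, which packages the axiomatic identification of BL-torsion with a linear combination of IK-torsion and the higher Miller-Morita-Mumford class (together with Ma's composition formula \cite{ma} and the gluing formula of \cite{pzz2}), and Igusa's formula \eqref{intro-bg0}. No substantive obstacle arises in the passage to Theorem 0.1$'$ beyond carefully tracking powers of $2$, $\pi$, and signs in the functional equation.
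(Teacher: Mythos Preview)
Your deduction of Theorem~\ref{thm-bg}$'$ from Theorem~\ref{thm-bg} is arithmetically correct, but in the paper the logical dependence runs the other way: Theorem~\ref{thm-bg} is never proved independently, and the only argument offered for it is the equivalence with Theorem~\ref{thm-bg}$'$ stated just before the latter. Taking Theorem~\ref{thm-bg} as an established input therefore makes your argument circular in the context of this paper.

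The paper's own proof of Theorem~\ref{thm-bg}$'$ proceeds directly. It first verifies that $\tau^\mathrm{BL}$ satisfies Igusa's Axioms 1--3 (naturality is trivial; additivity comes from the gluing formula \cite{pzz2} via Theorem~\ref{thm-gluing}; transfer comes from Ma's composition formula \cite{ma} via Theorem~\ref{thm-transfer}). Then \cite[Cor.~4.5]{ig2}, which already separates the odd- and even-dimensional cases, produces constants $a_{2k},b_{2k}\in\R$ with
\[
\frac{2^{4k}\big((2k)!\big)^2}{(4k+1)!}\,\tau^\mathrm{BL}_{2k}(M/S,\mathbb{1}) \;=\;
\begin{cases}
a_{2k}\,\tau^\mathrm{IK}_{2k}(M/S,\mathbb{1}) & \dim Z \text{ odd},\\[1mm]
b_{2k}\big[\int_Z e(TZ)\mathrm{ch}(TZ)\big]^{[4k]} & \dim Z \text{ even}.
\end{cases}
\]
The constant $a_{2k}=-\,(2k)!/(2\pi)^{2k}$ is read off by specializing to a circle bundle and comparing the explicit formulas of Theorems~\ref{thm-circle} and \ref{thm2-circle}. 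The constant $b_{2k}$ is shown to vanish by specializing to an $\mathbb{S}^2$-bundle with fibers of unit volume: then the $L^2$-metric on $H^\bullet(Z)$ is flat, the correction term in \eqref{eq-def-tcl} disappears, and $\tau^\mathrm{BL}_{2k}$ equals the class of $\mathscr{T}(T^HM,g^{TZ},g^{\mathbb{1}})$, which is zero by \cite[Thm.~3.26]{bl}. Your computation with \eqref{intro-bg0} and the functional equation is precisely what the paper then uses in the reverse direction, to pass from Theorem~\ref{thm-bg}$'$ to Theorem~\ref{thm-bg}.
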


Now we briefly explain the proof of Theorem \ref{thm-bg}'.

One of the key ingredients in the proof is Igusa's axiomatization of higher torsion invariants \cite{ig2}.
We consider an invariant $\tau$ assigning a cohomology class
\begin{equation}
\tau(M/S)\in H^\mathrm{even}(S)
\end{equation}
to any smooth fibration $\pi: M\rightarrow S$ with closed oriented fiber $Z$ such that $H^\bullet(Z)$ is unipotent.
Similarly to \eqref{intro-k},
we denote by $\tau_k(M/S) \in H^{2k}(S)$ the components of $\tau(M/S)$ of degree $2k$.
By \cite[Cor. 4.5]{ig2},
if $\tau$ satisfies Axioms 1-3 in \textsection \ref{subsect-axiom} with $F=\mathbb{1}$,
then there exist $a_{2k},b_{2k}\in\R$ such that for any $M/S$ under consideration,
we have
\begin{align}
\label{intro-ig}
\begin{split}
\tau_{2k}(M/S) & = a_{2k} \tau^\mathrm{IK}_{2k}(M/S,\mathbb{1}) \hspace{5mm} \text{if } Z \text{ is odd-dimensional} \;, \\
\tau_{2k}(M/S) & = b_{2k} \Big[\int_Z e(TZ)\mathrm{ch}(TZ) \Big]^{[4k]} \hspace{5mm} \text{if } Z \text{ is even-dimensional} \;.
\end{split}
\end{align}

Axiom 1 trivially holds for $\tau^\mathrm{BL}$.
We will show that Axiom 2 holds for $\tau^\mathrm{BL}$ using our recent result \cite{pzz2}.
And we will show that Axiom 3 holds for $\tau^\mathrm{BL}$ using the result of Ma \cite{ma}.
Here the calculation is quite straightforward.

Now we know that for certain $a_{2k},b_{2k}\in\R$,
the identities in \eqref{intro-ig} hold with $\tau(M/S)$ replaced by $\tau^\mathrm{BL}_{2k}\big(M/S,\mathbb{1}\big)$.
To find $a_{2k},b_{2k}$,
it is sufficient to consider $\mathbb{S}^n$-bundles with $n=1,2$.

\vspace{5mm}

\noindent\textbf{Partial result for unitarily flat complex vector bundles.}
A topological space $X$ is called simple,
if $X$ is connected, the fundamental group $\pi_1(X)$ is abelian, and $\pi_1(X)$ acts trivially on the higher homotopy groups $\pi_{\geqslant 2}(X)$.

Let $S$ be smooth manifold with $\pi_1(S)$ finite.
Let $\pi: M\rightarrow S$ be a smooth fibration with simple closed oriented fiber.
Let $(F,\n^F)$ be a unitarily flat complex vector bundle over $M$ with finite holonomy group.

We assume that $H^\bullet(Z,F)$ is unipotent.

Similarly to \eqref{intro-k},
we denote by $\tau^\mathrm{BL}_k(M/S,F), \tau^\mathrm{IK}_k(M/S,F) \in H^{2k}(S)$
the components of $\tau^\mathrm{BL}(M/S,F), \tau^\mathrm{IK}(M/S,F)$ of degree $2k$.

The following theorem partially confirms the higher Cheeger-M{\"u}ller/Bismut-Zhang theorem proposed by Goette \cite[Thm. 5.5]{g}.

\begin{thm}
\label{thm-bg2}
Under the assumptions above,
for any positive integer $k$,
we have
\begin{align}
\label{eq-thm-bg2}
\begin{split}
& \frac{2^{4k}\big((2k)!\big)^2}{(4k+1)!}\tau^\mathrm{BL}_{2k}(M/S,F) \\
& = - \frac{(2k)!}{(2\pi)^{2k}} \tau^\mathrm{IK}_{2k}(M/S,F) + \frac{\zeta'(-2k)\mathrm{rk}F}{2} \Big[\int_Z e(TZ)\mathrm{ch}(TZ) \Big]^{[4k]}  \in H^{4k}(S) \;,
\end{split}
\end{align}
for any non negative integer $k$,
we have
\begin{equation}
\label{eq2-thm-bg2}
\frac{2^{4k+2}\big((2k+1)!\big)^2}{(4k+3)!}\tau^\mathrm{BL}_{2k+1}(M/S,F)
= - \frac{(2k+1)!}{(2\pi)^{2k+1}} \tau^\mathrm{IK}_{2k+1}(M/S,F)  \in H^{4k+2}(S) \;.
\end{equation}
\end{thm}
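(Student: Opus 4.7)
The plan is to adapt the axiomatic approach used in the proof of Theorem \ref{thm-bg}, replacing Igusa's axiomatization \cite{ig2} by Ohrt's \cite{ohrt}, which is tailored precisely to fibrations equipped with unitarily flat complex vector bundles of finite holonomy over a base with finite fundamental group. Recall that Ohrt's theorem characterizes any invariant satisfying his axioms as a universal $\R$-linear combination of the IK-torsion and the higher Miller-Morita-Mumford class of $(M/S,F)$. Since $F$ is unitarily flat, $\mathrm{ch}(F) = \mathrm{rk}(F)$ in degree zero, so the MMM class reduces to $\mathrm{rk}(F) \cdot \big[\int_Z e(TZ)\,\mathrm{ch}(TZ)\big]$.

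The first step is to verify that $\tau^{\mathrm{BL}}(M/S,F)$ satisfies Ohrt's axioms. In complete analogy with the verification for Igusa's axioms sketched after Theorem \ref{thm-bg}, the additivity axiom will follow from our gluing formula \cite{pzz2}, and the transfer axiom from Ma's composition formula \cite{ma}. Any further axioms specific to Ohrt's setup -- for instance those concerning direct sums of $F$ or pull-back under finite étale covers of $S$ -- should follow from natural compatibility properties of the Bismut-Lott construction together with the definition \eqref{intro-Tcl} of $\mathscr{T}_{\mathrm{cl}}$, whose correction terms are designed precisely to handle the filtration structure on $H^\bullet(Z,F)$.

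Once the axioms are in hand, Ohrt's classification yields universal constants $a_k, b_k \in \R$ (independent of $F$ and of $M/S$) such that
\[
\tau^{\mathrm{BL}}_k(M/S,F) = a_k\,\tau^{\mathrm{IK}}_k(M/S,F) + b_k\,\mathrm{rk}(F)\,\Big[\int_Z e(TZ)\,\mathrm{ch}(TZ)\Big]^{[2k]}.
\]
For the even-indexed components $\tau^{\mathrm{BL}}_{2k}$ in degree $4k$, the constants $a_{2k}, b_{2k}$ are identified by specializing to $F = \mathbb{1}$ and invoking Theorem \ref{thm-bg}, which yields \eqref{eq-thm-bg2}. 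For the odd-indexed components $\tau^{\mathrm{BL}}_{2k+1}$ in degree $4k+2$, the MMM contribution vanishes automatically: $\mathrm{ch}(TZ)$ is a polynomial in the Pontryagin classes of the real bundle $TZ$, hence is supported in degrees divisible by $4$, so $\big[\int_Z e(TZ)\,\mathrm{ch}(TZ)\big]^{[4k+2]} = 0$. Only $a_{2k+1}\,\tau^{\mathrm{IK}}_{2k+1}$ survives on the right-hand side of \eqref{eq2-thm-bg2}; the coefficient $a_{2k+1}$ is then identified by an explicit calibration on a test fibration carrying a non-trivial unitarily flat coefficient, since the $F = \mathbb{1}$ specialization collapses to a tautology ($\tau^{\mathrm{BL}}_{2k+1}(M/S,\mathbb{1}) = 0$ by \cite{bl}).

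The main obstacle is the verification of Ohrt's axioms for $\tau^{\mathrm{BL}}$ -- in particular the extension of our gluing formula \cite{pzz2} to a unitarily flat coefficient bundle with finite holonomy, and the correct bookkeeping of the correction terms appearing in $\mathscr{T}_{\mathrm{cl}}$ under the gluing. A secondary difficulty is the calibration of $a_{2k+1}$, for which a well-chosen test fibration with non-trivial $F$ must be analyzed explicitly, going beyond the trivial line bundle input of Theorem \ref{thm-bg}.
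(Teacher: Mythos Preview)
Your proposal is correct and follows essentially the same route as the paper: verify Ohrt's axioms for $\tau^{\mathrm{BL}}$ (the paper lists seven---naturality, additivity, transfer, triviality, additivity of coefficients, induction, and continuity---and checks them via \cite{pzz2}, \cite{ma}, the vanishing result of Bismut--Lott for real coefficients, and the explicit circle-bundle formula \cite[Cor.~4.14]{bl}), apply Ohrt's classification, then calibrate the constants by specializing to $F=\mathbb{1}$ in degree $4k$ and to the circle bundle $\big(\mathbb{S}^1(L^{\otimes n})/\CP^\infty,F_\alpha\big)$ in degree $4k+2$. Your stated obstacles are smaller than you fear: the gluing formula of \cite{pzz2} already covers arbitrary flat coefficients, and the circle-bundle computation serves double duty, verifying the continuity axiom and supplying the odd-degree calibration in one stroke.
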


One of the key ingredients in the proof of Theorem \ref{thm-bg2} is Ohrt's axiomatization of higher torsion invariants \cite{ohrt}.
We will show that $\tau^\mathrm{BL}$ satisfies Ohrt's axioms (see Axioms 1-7 in \textsection \ref{subsect-axiom}).
Then,
by \cite[Theorem 0.1]{ohrt},
there exist $a_{2k},b_{2k}\in\R$ such that for any $(M/S,F)$ under consideration,
we have
\begin{equation}
\frac{2^{4k}\big((2k)!\big)^2}{(4k+1)!} \tau^\mathrm{BL}_{2k}(M/S,F)
= a_{2k} \tau^\mathrm{IK}_{2k}(M/S,F) + b_{2k} \mathrm{rk}F \Big[\int_Z e(TZ)\mathrm{ch}(TZ)\Big]^{[4k]} \;.
\end{equation}
We can find $a_{2k},b_{2k}$ by considering $\mathbb{S}^n$-bundles equipped with trivial flat complex line bundles.
And \eqref{eq2-thm-bg2} follows from a similar argument.

\vspace{5mm}

\noindent\textbf{Notations.}
We summarize some frequently used notations and conventions.

For a smooth manifold $S$,
we denote by $\Omega^k(S)$ the vector space of differential forms on $S$ of degree $k$.
In particular,
we have $\Omega^0(S) = \smooth(S)$.
We denote
\begin{equation}
\label{notation-Omega}
\Omega^\bullet(S) = \bigoplus_k \Omega^k(S) \;,\hspace{5mm}
\Omega^\mathrm{even/odd}(S) = \bigoplus_{k \; \mathrm{even/odd}} \Omega^k(S) \;.
\end{equation}
For $\omega = \sum_k \omega_k \in \Omega^\bullet(S)$ with $\omega_k \in \Omega^k(S)$,
we denote
\begin{equation}
\label{intro-deg}
\omega^{[k]} = \omega_k \;,\hspace{5mm}
\omega^{[>k]} = \sum_{j>k}\omega_j \;.
\end{equation}
Following \cite[Def. 1.10]{bl},
we denote by
\begin{equation}
\label{intro-Q}
Q^S \subseteq \Omega^\mathrm{even}(S)
\end{equation}
the vector subspace of real even forms on $S$,
and denote by
\begin{equation}
\label{intro-Q0}
Q^{S,0}\subseteq Q^S
\end{equation}
the vector subspace of exact real even forms on $S$.

For a vector bundle $E$ over $S$,
we denote by $\Omega^k(S,E)$ the vector space of differential forms on $S$ of degree $k$ with values in $E$.
In particular,
we have $\Omega^0(S,E) = \smooth(S,E)$.
We use the notation $\Omega^\bullet(S,E)$ in the same way as in \eqref{notation-Omega}.

\vspace{5mm}

\noindent\textbf{Acknowledgments.}
The authors are grateful to Professor Xiaonan Ma for having raised the question which is solved in this paper.
The authors thank Professor Bismut and Professor Igusa for their interest in this work.

Zhang is supported by KIAS individual Grant MG077401 at Korea Institute for Advanced Study.

\section{Preliminaries}

\subsection{Torsion form}
This subsection follows \cite[\textsection II]{bl}.

Let $S$ be a smooth manifold.
Let
\begin{equation}
\big(E^\bullet,\n^{E^\bullet},\partial\big) : \; 0 \rightarrow E^0 \rightarrow \cdots \rightarrow E^n \rightarrow 0
\end{equation}
be an exact sequence of flat complex vector bundles over $S$.
We extend the action $\partial: \smooth(S,E^i) \rightarrow \smooth(S,E^{i+1})$ to $\partial: \Omega^k(S,E^i) \rightarrow \Omega^k(S,E^{i+1})$ such that
\begin{equation}
\partial (\tau \otimes w) = (-1)^k \tau \otimes \partial w \hspace{2.5mm}
\text{for } \tau\in\Omega^k(S) \text{ and } w\in\smooth(S,E^i) \;.
\end{equation}
We extend the action $\n^{E^\bullet}: \smooth(S,E^\bullet) \rightarrow \Omega^1(S,E^\bullet)$ to $\n^{E^\bullet}: \Omega^k(S,E^\bullet) \rightarrow \Omega^{k+1}(S,E^\bullet)$ such that
\begin{equation}
\n^{E^\bullet}(\tau \otimes w) = d\tau \otimes w +(-1)^k \tau \wedge \n^{E^\bullet} w \hspace{2.5mm}
\text{for } \tau\in\Omega^k(S) \text{ and } w\in\smooth(S,E^\bullet) \;.
\end{equation}
Under the assumptions and the conventions above,
we have
\begin{equation}
\label{eq-nW-sq}
\partial^2 = 0 \;,\hspace{5mm}
\big(\n^{E^\bullet}\big)^2 = 0 \;,\hspace{5mm}
\partial \n^{E^\bullet} + \n^{E^\bullet} \partial = 0 \;.
\end{equation}
Set
\begin{equation}
\label{eq-Aprimprim}
A'' = \partial + \n^{E^\bullet} \;.
\end{equation}
By \eqref{eq-nW-sq} and \eqref{eq-Aprimprim},
we have
\begin{equation}
\big( A'' \big)^2 = 0 \;.
\end{equation}
Thus $A''$ is a flat superconnection in the sense of \cite[\textsection I]{bl}.

Let $g^{E^\bullet} = \bigoplus_{k=0}^n g^{E^k}$ be a Hermitian metric on $E^\bullet$.
Similarly to \eqref{intro-def-omegaF},
we define
\begin{equation}
\label{eq-def-omegaW}
\omega^{E^\bullet} =
\big(g^{E^\bullet}\big)^{-1} \n^{E^\bullet} g^{E^\bullet} \in \Omega^1\big(S,\End(E^\bullet)\big) \;.
\end{equation}
Let $\partial^*$ be the adjoint of $\partial$.
Let $A'$ be the adjoint superconnection of $A''$ in the sense of \cite[\textsection I]{bl}.
By \cite[\textsection II(b)]{bl},
we have
\begin{equation}
A' = \partial^* + \n^{E^\bullet} + \omega^{E^\bullet} \;.
\end{equation}
Set
\begin{equation}
X
= \frac{1}{2} (A' - A'')
= \frac{1}{2}(\partial^*-\partial) + \frac{1}{2} \omega^{E^\bullet}
\in \Omega^\bullet\big(S,\End(E^\bullet)\big) \;.
\end{equation}
Let $N^{E^\bullet}$ be the number operator on $E^\bullet$,
i.e., $N^{E^\bullet}\big|_{E^k} = k\Id$.
For $t>0$,
let $X_t$ be the operator $X$ with the metric $g^{E^\bullet}$ replaced by $t^{N^{E^\bullet}}g^{E^\bullet}$.
We have
\begin{equation}
\label{eq-def-Xt}
X_t = \frac{1}{2}(t \partial^*-\partial) + \frac{1}{2} \omega^{E^\bullet} \;.
\end{equation}

We denote
\begin{equation}
\chi'(E^\bullet) = \sum_{k=0}^n (-1)^kk \,\mathrm{rk}\big(E^k\big) \;.
\end{equation}

Recall that $f(z) = z e^{z^2}$ and $f'(z) = (1+2z^2) e^{z^2}$.
Recall that $Q^S$ was defined in \eqref{intro-Q}.

The following definition is due to Bismut and Lott \cite[Def. 2.20]{bl}.

\begin{defn}
\label{def-tf}
The torsion form associated with
$(\n^{E^\bullet},\partial,g^{E^\bullet})$ is defined as
\begin{align}
\label{eq-def-torsion-form}
\begin{split}
& \mathscr{T}\big(\n^{E^\bullet},\partial,g^{E^\bullet}\big) \\
& =  - \int_0^{+\infty} \bigg\{ \varphi \tr \Big[(-1)^{N^{E^\bullet}}\frac{N^{E^\bullet}}{2} f'(X_t) \Big]
- \frac{1}{2} \chi'(E^\bullet) f'\Big(\frac{i\sqrt{t}}{2}\Big) \bigg\} \frac{dt}{t}
\in Q^S \;.
\end{split}
\end{align}
By \cite[Thm. 2.13, Prop. 2.18]{bl},
the integrand in \eqref{eq-def-torsion-form} is integrable.
\end{defn}

Let $f\big(\n^{E^\bullet},g^{E^\bullet}\big) \in \Omega^\mathrm{odd}(S)$
be as in \eqref{intro-fsum}.
By \cite[Thm. 2.22]{bl},
we have
\begin{equation}
\label{eq-dtf}
d \mathscr{T}\big(\n^{E^\bullet},\partial,g^{E^\bullet}\big) = f\big(\n^{E^\bullet},g^{E^\bullet}\big) \;.
\end{equation}

The following theorem is due to Bismut and Lott \cite[Thm. 1.8(iv)]{bl}.

We will use the notation in \eqref{intro-deg}.

\begin{thm}
\label{thm-tf-4kplus2}
If the quadruple $(E^\bullet,\n^{E^\bullet},\partial,g^{E^\bullet})$
is the complexification of a real quadruple $(E^\bullet_\R,\n^{E^\bullet_\R},\partial_\R,g^{E^\bullet_\R})$,
then
\begin{equation}
\label{eq-thm-tf-4kplus2}
\mathscr{T}^{[k]}\big(\n^{E^\bullet},\partial,g^{E^\bullet}\big) = 0 \hspace{2.5mm} \text{for } k \equiv 2 \; (\mathrm{mod} \; 4) \;.
\end{equation}
\end{thm}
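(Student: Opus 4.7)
The plan is to exploit the real structure to show that the integrand defining $\mathscr{T}^{[4j+2]}$ is purely imaginary, and then combine with the (already established) reality of $\mathscr{T}$ to force vanishing. First I would unpack what the hypothesis buys. Working in a local frame of $E^\bullet$ induced from the real structure, $\partial$ and $\n^{E^\bullet}$ have matrices whose entries are real-valued differential forms on $S$; the Hermitian metric $g^{E^\bullet}$ is the sesquilinear extension of the real inner product $g^{E^\bullet_\R}$, so its Gram matrix is real as well. Consequently $(g^{E^\bullet})^{-1}$ is real, the one-form $\omega^{E^\bullet} = (g^{E^\bullet})^{-1}\n^{E^\bullet}g^{E^\bullet}$ is real, and the formal adjoint $\partial^*$ (which in such a frame is just the transpose relative to the real Gram matrix) is real. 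The number operator $N^{E^\bullet}$ is manifestly real.

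Next I would observe that
\begin{equation*}
X_t = \tfrac{1}{2}(t\partial^* - \partial) + \tfrac{1}{2}\omega^{E^\bullet}
\end{equation*}
is a combination of real operators and a real endomorphism-valued one-form, hence real, and therefore $f'(X_t) = (1 + 2X_t^2)e^{X_t^2}$ is real. Taking the supertrace of the real operator $(-1)^{N^{E^\bullet}}\tfrac{N^{E^\bullet}}{2}f'(X_t)$ produces an element of $\Omega^\bullet(S)$ whose homogeneous components all have real coefficients.

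Now comes the crucial bookkeeping with $\varphi$. By \eqref{eq-def-varphi}, $\varphi$ acts on $\Omega^k(S)$ by the scalar $(2\pi i)^{-k/2}$. For $k=4j+2$ this scalar equals $(2\pi)^{-(2j+1)}i^{-(2j+1)} \in i\R$, i.e.\ it is purely imaginary. Applying $\varphi$ to the real $(4j+2)$-form obtained in the previous step therefore produces a purely imaginary $(4j+2)$-form. The counter-term $\tfrac{1}{2}\chi'(E^\bullet)f'(i\sqrt{t}/2)$ is a scalar function of $t$ and lives only in degree $0$, so it does not affect any component of positive degree. Integrating against $dt/t$ preserves the purely imaginary character, and hence $\mathscr{T}^{[4j+2]}(\n^{E^\bullet},\partial,g^{E^\bullet})$ is purely imaginary.

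Finally, Bismut--Lott guarantee that $\mathscr{T}(\n^{E^\bullet},\partial,g^{E^\bullet}) \in Q^S$, so every homogeneous component is a real form. A form which is simultaneously real and purely imaginary must vanish, which yields \eqref{eq-thm-tf-4kplus2}. There is no substantial obstacle in this argument; the only items deserving care are the verification that each building block of $X_t$, most notably $\omega^{E^\bullet}$ and $\partial^*$, inherits the real structure, and the phase count showing that $\varphi$ is imaginary in precisely the degrees $k\equiv 2\pmod 4$.
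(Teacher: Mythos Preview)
Your proof is correct and follows essentially the same strategy as the paper's: both combine a reality constraint on the integrand coming from the real structure with the already-known fact that $\mathscr{T}\in Q^S$, and then read off the vanishing from the phase of $\varphi$ in degrees $\equiv 2\pmod 4$. Where the paper simply invokes ``the proof of \cite[Thm.~1.8(iv)]{bl}'' to obtain a conjugation identity for the trace, you spell the mechanism out directly---showing that $\partial,\partial^*,\omega^{E^\bullet},N^{E^\bullet}$ are all real in a real frame, hence so is $f'(X_t)$ and its (super)trace---which makes the argument more transparent but is not a genuinely different route.
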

\begin{proof}
Note that $N^{E^\bullet}$ is self-adjoint,
the proof of \cite[Thm. 1.8(iv)]{bl} yields
\begin{equation}
\overline{\tr \left[(-1)^{N^{E^\bullet}} \frac{N^{E^\bullet}}{2} f'(X_t) \right]^{[k]}}
= (-1)^{k(k-1)/2} \tr \left[(-1)^{N^{E^\bullet}} \frac{N^{E^\bullet}}{2} f'(X_t) \right]^{[k]} \;.
\end{equation}
From Definition \ref{def-tf} and the fact that $\mathscr{T}\big(\n^{E^\bullet},\partial,g^{E^\bullet}\big)$ is real,
we obtain \eqref{eq-thm-tf-4kplus2}.
This completes the proof.
\end{proof}

By the end of this subsection,
we introduce several notations.

For a flat complex vector bundle $(E,\n^E)$ over $S$
and Hermitian metrics $g^E,{g^E}'$ on $E$,
we denote by $\mathscr{T}\big(g^E,{g^E}'\big)$ the torsion form of the exact sequence
\begin{equation}
\label{eq-EE}
0 \rightarrow E \xrightarrow{\mathrm{Id}} E \rightarrow 0 \;,
\end{equation}
where the first $E$ is equipped with the metric $g^E$
and the second $E$ is equipped with the metric ${g^E}'$.

For a flat complex vector bundle $(E,\n^E)$ over $S$,
a flat subbundle $F\subseteq E$
and Hermitian metrics $g^F,g^E,g^{E/F}$ on $F,E,E/F$,
we denote by $\mathscr{T}\big(g^F,g^E,g^{E/F}\big)$ the torsion form of the exact sequence
\begin{equation}
\label{eq-FE}
0 \rightarrow F \rightarrow E \rightarrow E/F \rightarrow 0
\end{equation}
equipped with metrics $g^F,g^E,g^{E/F}$.

\subsection{Torsion form of filtration with unitary factors}
\label{subsect-lim-tf}

Let $(E,\n^E)$ be a flat complex vector bundle over $S$.
We assume that $E$ is filtered by flat subbundles with unitary factors.
More precisely,
there is a filtration of $E$ by flat subbundles
\begin{equation}
E_\bullet: \; E = E_r \supseteq E_{r-1} \supseteq \cdots \supseteq E_1 \supseteq E_0 = 0 \;,
\end{equation}
such that $F_j := E_j/E_{j-1}$ admits a flat Hermitian metric for each $j$.

Let $g^E$ be a Hermitian metric on $E$.
Let $g^{E_j}$ be the restricted metric on $E_j$.
Let $g^{F_j}$ be a flat Hermitian metric on $F_j$.
Recall that $\mathscr{T}(\cdot,\cdot,\cdot)$ was defined in the paragraph containing \eqref{eq-FE}.
Following \cite[Def. 3.1]{ma},
we define
\begin{equation}
\label{eq-def-Tfilt}
\mathscr{T}\big(E_\bullet,g^{F_\bullet},g^E\big) =
- \sum_{j=1}^r  \mathscr{T}\big(g^{E_{j-1}},g^{E_j},g^{F_j}\big) \;.
\end{equation}
By \eqref{intro-fsum}, \eqref{eq-dtf} and \eqref{eq-def-Tfilt},
we have
\begin{equation}
\label{eq0-dlimtf}
d \mathscr{T}\big(E_\bullet,g^{F_\bullet},g^E\big) = f\big(\n^E,g^E\big) \;.
\end{equation}

Recall that $Q^{S,0}$ was defined in \eqref{intro-Q0}.
We will use the notation in \eqref{intro-deg}.

\begin{thm}
\label{thm-ind}
For
\begin{equation}
E_\bullet': \; E = E_s' \supseteq E_{s-1}' \supseteq \cdots \supseteq E_1' \supseteq E_0' = 0
\end{equation}
another filtration with unitary factors
and $g^{F_j'}$ flat Hermitian metric on $F_j' := E_j'/E_{j-1}'$,
we have
\begin{equation}
\label{eq-thm-ind}
\mathscr{T}^{[>0]}\big(E_\bullet',g^{F_\bullet'},g^E\big) - \mathscr{T}^{[>0]}\big(E_\bullet,g^{F_\bullet},g^E\big) \in Q^{S,0} \;.
\end{equation}
\end{thm}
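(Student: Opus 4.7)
\medskip
\noindent\textit{Proof plan.}
By \eqref{eq0-dlimtf}, both torsion forms $\mathscr{T}\big(E_\bullet,g^{F_\bullet},g^E\big)$ and $\mathscr{T}\big(E_\bullet',g^{F_\bullet'},g^E\big)$ have the same differential $f\big(\n^E,g^E\big)$, so the difference
\begin{equation*}
\Delta := \mathscr{T}^{[>0]}\big(E_\bullet',g^{F_\bullet'},g^E\big) - \mathscr{T}^{[>0]}\big(E_\bullet,g^{F_\bullet},g^E\big)
\end{equation*}
is a closed real even form of positive degree; the task is to upgrade closedness to exactness. Our plan has three steps: (i) construct a common refinement of the two filtrations; (ii) show that passing to a refinement alters $\mathscr{T}^{[>0]}$ only by an element of $Q^{S,0}$; (iii) handle the dependence on the choice of flat Hermitian metrics on the factors.

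For step (i), we use that flat subbundles of $(E,\n^E)$ correspond to monodromy-invariant subspaces of the typical fibre, so each $E_j\cap E_k'$ is a flat subbundle of constant rank. We arrange the $E_j\cap E_k'$ into a zig-zag filtration $E_\bullet''$ refining both $E_\bullet$ and $E_\bullet'$. Its successive quotients are flat subquotients of the $F_j$ or $F_k'$; since any flat subquotient of a unitarily flat bundle is unitarily flat (the corresponding sub- or quotient representation of a unitary representation remains unitary), $E_\bullet''$ is again filtered with unitary factors, and we can choose flat Hermitian metrics $g^{F_\bullet''}$ on them.

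For step (ii), by induction on the number of insertions it is enough to treat the case where $E_\bullet''$ is obtained by inserting one intermediate flat subbundle $\tilde E$ with $E_{j-1}\subset\tilde E\subset E_j$ and both $\tilde E/E_{j-1}$, $E_j/\tilde E$ unitarily flat. Unwinding \eqref{eq-def-Tfilt}, this reduces to the three-term composition identity
\begin{equation*}
\mathscr{T}\big(g^{E_{j-1}},g^{\tilde E},g^{\tilde E/E_{j-1}}\big) + \mathscr{T}\big(g^{\tilde E},g^{E_j},g^{E_j/\tilde E}\big) - \mathscr{T}\big(g^{E_{j-1}},g^{E_j},g^{F_j}\big) \in Q^{S,0},
\end{equation*}
valid modulo the component in degree zero. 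We would establish this by assembling the three short exact sequences into a single flat superconnection on the graded bundle $E_{j-1}\oplus \tilde E/E_{j-1}\oplus E_j/\tilde E$, deforming the Hermitian structure along a 1-parameter family connecting the two natural metric choices, and applying the variation formula for torsion forms of \cite[\textsection II]{bl}; this expresses the left-hand side as the $d$ of an explicit transgression, up to the degree-zero defect that is discarded by the $[>0]$ truncation.

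For step (iii), two flat Hermitian metrics on a unitarily flat bundle differ by a flat automorphism, and the Bismut-Lott anomaly formula for the torsion form of a short exact sequence under a change of the quotient metric shows that the resulting change is exact in positive degree. The main obstacle is the composition identity of step (ii): although it is morally contained in the formalism of \cite[\textsection II]{bl}, it is only true \emph{modulo $Q^{S,0}$}, so the degree-zero defect coming from three distinct $\log\det$-type terms must be tracked carefully, and designing the interpolation of metrics so that this cancellation is transparent is the delicate part of the argument.
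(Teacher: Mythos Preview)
Your overall strategy---reduce to refinements and handle the metric freedom on factors separately---matches the paper's, but step~(i) contains a genuine gap. A single common refinement of $E_\bullet$ and $E_\bullet'$ need not exist: take $E=\C^2$ with $E_1=\C e_1$ and $E_1'=\C e_2$; any chain refining $0\subset E_1\subset E$ must contain $E_1$, and likewise for $E_1'$, but $E_1$ and $E_1'$ are incomparable, so no filtration can contain both. The intersections $E_j\cap E_k'$ form a lattice, not a chain, and even adjoining sums does not produce a common refinement. What Schreier's refinement theorem gives you is two \emph{different} refinements (one of $E_\bullet$, one of $E_\bullet'$) with isomorphic factors, not a common one; comparing those two refinements is a separate problem that your step~(ii) does not address.

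The paper resolves this by a Jordan--H\"older style chain (its Step~4): it produces filtrations $E_\bullet^0,\dots,E_\bullet^m$ with unitary factors, all built from the $E_j\cap E_k'$, such that $E_\bullet^0$ refines $E_\bullet$, $E_\bullet^m$ refines $E_\bullet'$, and consecutive $E_\bullet^i$, $E_\bullet^{i+1}$ agree except at a single index $a_i$. That ``differ in one term'' case (the paper's Step~3) is handled by inserting $E_a\cap E_a'$ and $E_a+E_a'$, which makes \emph{both} local filtrations refinements of the same coarser one and reduces to your insertion step~(ii). So your step~(ii) is the right engine, but it must be driven through this chain of filtrations rather than through a nonexistent common refinement.

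A smaller remark on step~(ii): rather than building a bespoke deformation, the paper invokes \cite[Thm.~A1.4]{bl}, which packages exactly the composition identity you want for any commutative diagram of flat bundles with exact rows and columns. Citing it directly sidesteps the degree-zero bookkeeping you flag as delicate; the residual term $\mathscr{T}\big(g^{\tilde E/E_{j-1}},g^{F_j},g^{E_j/\tilde E}\big)$ is locally constant (all three metrics are flat) and hence vanishes in positive degree. Your step~(iii) is correct and corresponds to the paper's Step~1.
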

\begin{proof}
The proof consists of several steps.

\vspace{1.5mm}

\noindent\textbf{Step 1.}
We prove \eqref{eq-thm-ind} under the assumption that $r=s$ and $E_j' = E_j$ for each $j$.

From \eqref{eq-def-Tfilt} and our assumptions,
we get
\begin{align}
\label{eq11-pf-thm-ind}
\begin{split}
& \mathscr{T}\big(E_\bullet',g^{F_\bullet'},g^E\big) - \mathscr{T}\big(E_\bullet,g^{F_\bullet},g^E\big) \\
& = - \sum_{j=1}^r \Big(\mathscr{T}\big(g^{E_{j-1}},g^{E_j},g^{F_j'}\big) - \mathscr{T}\big(g^{E_{j-1}},g^{E_j},g^{F_j}\big)\Big) \;.
\end{split}
\end{align}
We consider the following commutative diagram of flat complex vector bundles over $S$ with exact rows and columns,
\begin{align}
\label{eq1diag-pf-thm-ind}
\begin{split}
\xymatrix{
0 \ar[r] & E_{j-1} \ar[r] & E_j \ar[r] & F_j \ar[r] & 0 \\
0 \ar[r] & E_{j-1} \ar[r] \ar[u]_{\mathrm{Id}} & E_j \ar[r] \ar[u]_{\mathrm{Id}} & F_j \ar[r] \ar[u]_{\mathrm{Id}} & 0 \;,}
\end{split}
\end{align}
where all the maps are canonical embeddings and projections.
We equip $E_{j-1},E_j$ in \eqref{eq1diag-pf-thm-ind} with the metrics $g^{E_{j-1}},g^{E_j}$,
equip the lower $F_j$ in \eqref{eq1diag-pf-thm-ind} with the metric $g^{F_j}$,
and equip the upper $F_j$ in \eqref{eq1diag-pf-thm-ind} with the metric $g^{F_j'}$.
Applying \cite[Theorem A1.4]{bl} to \eqref{eq1diag-pf-thm-ind},
we obtain
\begin{equation}
\label{eq12-pf-thm-ind}
\mathscr{T}\big(g^{F_j},g^{F_j'}\big)
= \mathscr{T}\big(g^{E_{j-1}},g^{E_j},g^{F_j}\big) - \mathscr{T}\big(g^{E_{j-1}},g^{E_j},g^{F_j'}\big) \hspace{2.5mm} \text{modulo } Q^{S,0} \;.
\end{equation}
On the other hand,
since $g^{F_j},g^{F_j'}$ are flat Hermitian metrics,
$\mathscr{T}\big(g^{F_j},g^{F_j'}\big)$ is a locally constant function on $S$.
As a consequence,
we have
\begin{equation}
\label{eq13-pf-thm-ind}
\mathscr{T}^{[>0]}\big(g^{F_j},g^{F_j'}\big) = 0 \;.
\end{equation}
From \eqref{eq11-pf-thm-ind}, \eqref{eq12-pf-thm-ind} and \eqref{eq13-pf-thm-ind},
we obtain \eqref{eq-thm-ind}.

\vspace{1.5mm}

\noindent\textbf{Step 2.}
We prove \eqref{eq-thm-ind} under the assumption that the filtration $E_\bullet'$ is a refinement of $E_\bullet$, i.e.,
there is a map $\sigma: \big\{1,\cdots,r\big\} \rightarrow \big\{1,\cdots,s\big\}$ such that $E_j = E_{\sigma(j)}'$ for each $j$.

We may and we will assume that $s=r+1$ and there exists $a\in\N$ such that
\begin{equation}
\label{eq21-pf-thm-ind}
E_j' = E_j \hspace{2.5mm} \text{for } j=1,\cdots,a-1 \;,\hspace{5mm}
E_j' = E_{j-1} \hspace{2.5mm} \text{for } j=a+1,\cdots,s \;.
\end{equation}
As a consequence,
we have
\begin{equation}
\label{eq22-pf-thm-ind}
F_j' = F_j \hspace{2.5mm} \text{for } j=1,\cdots,a-1 \;,\hspace{5mm}
F_j' = F_{j-1}  \hspace{2.5mm} \text{for } j=a+2,\cdots,s \;.
\end{equation}
By Step 1,
we may further assume that
\begin{equation}
\label{eq23-pf-thm-ind}
g^{F_j'} = g^{F_j} \hspace{2.5mm} \text{for } j=1,\cdots,a-1 \;,\hspace{5mm}
g^{F_j'} = g^{F_{j-1}}  \hspace{2.5mm} \text{for } j=a+2,\cdots,s \;.
\end{equation}
By \eqref{eq-def-Tfilt} and \eqref{eq21-pf-thm-ind}-\eqref{eq23-pf-thm-ind},
we have
\begin{align}
\label{eq24-pf-thm-ind}
\begin{split}
& \mathscr{T}\big(E_\bullet',g^{F_\bullet'},g^E\big) - \mathscr{T}\big(E_\bullet,g^{F_\bullet},g^E\big) \\
& = \mathscr{T}\big(g^{E_{a-1}},g^{E_a},g^{F_a}\big)
- \mathscr{T}\big(g^{E_{a-1}},g^{E_a'},g^{F_a'}\big)
- \mathscr{T}\big(g^{E_a'},g^{E_a},g^{F_{a+1}'}\big) \;.
\end{split}
\end{align}

We consider the following commutative diagram of flat complex vector bundles over $S$ with exact rows and columns,
\begin{align}
\label{eq2diag-pf-thm-ind}
\begin{split}
\xymatrix{
& & 0 & 0 & \\
& & F_{a+1}'  \ar[u] \ar[r]^{\mathrm{Id}} & F_{a+1}' \ar[u] & \\
0 \ar[r] & E_{a-1} \ar[r] & E_a \ar[r]  \ar[u] & F_a \ar[r] \ar[u] & 0 \\
0 \ar[r] & E_{a-1} \ar[r] \ar[u]_{\mathrm{Id}} & E_a' \ar[r] \ar[u] & F_a' \ar[r] \ar[u] & 0 \\
& & 0 \ar[u] & 0 \ar[u] &  \;,}
\end{split}
\end{align}
where all the maps are canonical embeddings and projections.
Applying \cite[Theorem A1.4]{bl} to \eqref{eq2diag-pf-thm-ind},
we obtain
\begin{align}
\label{eq25-pf-thm-ind}
\begin{split}
& \mathscr{T}\big(g^{F_a'},g^{F_a},g^{F_{a+1}'}\big)
- \mathscr{T}\big(g^{E_a'},g^{E_a},g^{F_{a+1}'}\big) \\
& = \mathscr{T}\big(g^{E_{a-1}},g^{E_a'},g^{F_a'}\big)
- \mathscr{T}\big(g^{E_{a-1}},g^{E_a},g^{F_a}\big) \hspace{2.5mm} \text{modulo } Q^{S,0} \;.
\end{split}
\end{align}
On the other hand,
since $g^{F_a'},g^{F_a},g^{F_{a+1}'}$ are flat Hermitian metrics,
$\mathscr{T}\big(g^{F_a'},g^{F_a},g^{F_{a+1}'}\big)$ is a locally constant function on $S$.
As a consequence,
we have
\begin{equation}
\label{eq26-pf-thm-ind}
\mathscr{T}^{[>0]}\big(g^{F_a'},g^{F_a},g^{F_{a+1}'}\big) = 0 \;.
\end{equation}
From \eqref{eq24-pf-thm-ind}, \eqref{eq25-pf-thm-ind} and \eqref{eq26-pf-thm-ind},
we obtain \eqref{eq-thm-ind}.

\vspace{1.5mm}

\noindent\textbf{Step 3.}
We prove \eqref{eq-thm-ind} under the assumption that
$r=s$ and there exists $a\in\N$ such that $E_j = E_j'$ for $j\neq a$.

By Step 2,
we may replace the filtration $E_\bullet$ by
\begin{equation}
E_r \supseteq \cdots \supseteq E_{a+1} \supseteq E_a+E_a' \supseteq E_a \supseteq E_a \cap E_a' \supseteq E_{a-1} \supseteq \cdots \supseteq E_0 = 0 \;,
\end{equation}
and replace the filtration $E_\bullet'$ by
\begin{equation}
E_r \supseteq \cdots \supseteq E_{a+1} \supseteq E_a+E_a' \supseteq E_a' \supseteq E_a \cap E_a' \supseteq E_{a-1} \supseteq \cdots \supseteq E_0 = 0 \;.
\end{equation}
Equivalently,
we may and we will assume that
\begin{equation}
E_{a+1} = E_{a+1}' = E_a + E_a' \;,\hspace{5mm}
E_{a-1} = E_{a-1}' = E_a \cap E_a' \;.
\end{equation}
As a consequence,
we have
\begin{equation}
E_{a+1}/E_{a-1} = F_a \oplus F_a' \;.
\end{equation}
Hence
\begin{equation}
\label{eq3f-pf-thm-ind}
E_r \supseteq \cdots \supseteq E_{a+1} \supseteq E_{a-1} \supseteq \cdots \supseteq E_0 = 0
\end{equation}
is a filtration with unitary factors.
Note that both $E_\bullet$ and $E_\bullet'$ are refinements of the filtration \eqref{eq3f-pf-thm-ind},
from Step 2,
we obtain \eqref{eq-thm-ind}.

\vspace{1.5mm}

\noindent\textbf{Step 4.}
We prove \eqref{eq-thm-ind}.

Similarly to the proof of Jordan-H{\"o}lder theorem,
we can find $E_\bullet^0,\cdots,E_\bullet^m$ filtrations of $E$ by flat subbundles with unitary factors such that
\begin{itemize}
\item[-] for each $i,j$, we have $E^i_j\in\big\{ E_k \cap E_l' \;:\; k=0,\cdots,r,\; l = 0,\cdots,s \big\}$;
\item[-] for $i=0,\cdots,m-1$, there exists $a_i\in\N$ such that $E_j^i = E_j^{i+1}$ for $j\neq a_i$;
\item[-] $E_\bullet^0$ is a refinement of $E_\bullet$, and $E_\bullet^m$ is a refinement of $E_\bullet'$.
\end{itemize}
For each $i,j$,
let $F^i_j = E^i_j/E^i_{j-1}$,
let $g^{F^i_j}$ be a flat Hermitian metric on $F^i_j$.
By Step 2,
we have
\begin{align}
\label{eq41-pf-thm-ind}
\begin{split}
& \mathscr{T}^{[>0]}\big(E_\bullet,g^{F_\bullet},g^E\big) - \mathscr{T}^{[>0]}\big(E_\bullet^0,g^{F_\bullet^0},g^E\big) \in Q^{S,0} \;,\\
& \mathscr{T}^{[>0]}\big(E_\bullet',g^{F_\bullet'},g^E\big) - \mathscr{T}^{[>0]}\big(E_\bullet^m,g^{F_\bullet^m},g^E\big) \in Q^{S,0} \;.
\end{split}
\end{align}
On the other hand,
by Step 3,
we have
\begin{equation}
\label{eq42-pf-thm-ind}
\mathscr{T}^{[>0]}\big(E_\bullet^i,g^{F^i_\bullet},g^E\big) - \mathscr{T}^{[>0]}\big(E_\bullet^{i+1},g^{F_\bullet^{i+1}},g^E\big) \in Q^{S,0}
\hspace{5mm} \text{for } i=0,\cdots,m-1 \;.
\end{equation}
From \eqref{eq41-pf-thm-ind} and \eqref{eq41-pf-thm-ind},
we obtain \eqref{eq-thm-ind}.
This completes the proof.
\end{proof}

\begin{defn}
\label{def-limtf}
We define
\begin{equation}
\label{eq-def-limtf}
\mathscr{T}\big(E,g^E\big) = \mathscr{T}^{[>0]}\big(E_\bullet,g^{F_\bullet},g^E\big) \in Q^S/Q^{S,0} \;.
\end{equation}
By Theorem \ref{thm-ind},
$\mathscr{T}\big(E,g^E\big)$ is well-defined.
\end{defn}

By Definition \ref{def-limtf} and \eqref{eq0-dlimtf},
we have
\begin{equation}
\label{eq-dlimtf}
d \mathscr{T}\big(E,g^E\big) = f\big(\n^E,g^E\big)^{[>1]} \;.
\end{equation}

Similarly to Theorem \ref{thm-tf-4kplus2},
we have the following result.

\begin{thm}
\label{thm-limtf-4kplus2}
If the triple $(E,\n^E,g^E)$
is the complexification of a real triple $(E_\R,\n^{E_\R},g^{E_\R})$,
then
\begin{equation}
\label{eq-thm-limtf-4kplus2}
\mathscr{T}^{[k]}\big(E,g^E\big) = 0 \hspace{2.5mm} \text{for } k \equiv 2 \; (\mathrm{mod} \; 4) \;.
\end{equation}
\end{thm}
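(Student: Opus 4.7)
The plan is to reduce Theorem~\ref{thm-limtf-4kplus2} to the exact-sequence version Theorem~\ref{thm-tf-4kplus2} by a doubling construction that manufactures a real structure, even when the individual flat subbundles in the filtration are not themselves invariant under the antilinear flat involution $\sigma: E \to E$ coming from the real structure $E = E_\R \otimes_\R \C$.

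I would fix any filtration $E_\bullet: 0 = E_0 \subset E_1 \subset \cdots \subset E_r = E$ by flat subbundles with unitary factors $F_j = E_j/E_{j-1}$ and any flat Hermitian metrics $g^{F_j}$ on $F_j$. Applying $\sigma$ produces a second such filtration $\bar E_\bullet := \sigma(E_\bullet)$ of $E$, with unitary factors $\bar F_j := \sigma(F_j)$ endowed with the conjugate flat Hermitian metrics $g^{\bar F_j}$ given by $g^{\bar F_j}(\sigma x, \sigma y) = \overline{g^{F_j}(x,y)}$. For each $j$, the direct-sum short exact sequence
\begin{equation*}
0 \to E_{j-1} \oplus \bar E_{j-1} \to E_j \oplus \bar E_j \to F_j \oplus \bar F_j \to 0
\end{equation*}
carries the antilinear involution $\tau(x,y) = (\sigma^{-1} y, \sigma x)$, which commutes with all the embeddings, projections and flat connections of the sequence and satisfies $g(\tau v, \tau w) = \overline{g(v,w)}$ for the direct-sum metric. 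In fact $E_j \oplus \bar E_j$ is canonically the complexification of $(E_j)_\R$ (that is, $E_j$ with its complex structure forgotten), and under this identification $\tau$ is the associated complex conjugation; hence the direct-sum exact sequence, together with its flat connections, differentials and metrics, is the complexification of a real quadruple.

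Theorem~\ref{thm-tf-4kplus2} then gives that the torsion form of the direct-sum exact sequence vanishes in degrees $k \equiv 2 \; (\mathrm{mod} \; 4)$. Since the Bismut--Lott torsion form of Definition~\ref{def-tf} is additive under direct sums of exact sequences (the operator $X_t$, the integer $\chi'$ and the trace all decompose additively in block-diagonal form), this reads
\begin{equation*}
\mathscr{T}^{[k]}\big(g^{E_{j-1}}, g^{E_j}, g^{F_j}\big)
+ \mathscr{T}^{[k]}\big(g^{\bar E_{j-1}}, g^{\bar E_j}, g^{\bar F_j}\big) = 0
\hspace{2.5mm}\text{for } k \equiv 2 \; (\mathrm{mod} \; 4).
\end{equation*}
Summing over $j$ and invoking the definition \eqref{eq-def-Tfilt} gives $\mathscr{T}^{[k]}(E_\bullet, g^{F_\bullet}, g^E) + \mathscr{T}^{[k]}(\bar E_\bullet, g^{\bar F_\bullet}, g^E) = 0$ for such $k$. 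On the other hand, because $E_\bullet$ and $\bar E_\bullet$ are both filtrations of $E$ by flat subbundles with unitary factors, Theorem~\ref{thm-ind} yields $\mathscr{T}^{[k]}(E_\bullet, g^{F_\bullet}, g^E) - \mathscr{T}^{[k]}(\bar E_\bullet, g^{\bar F_\bullet}, g^E) \in Q^{S,0}$ for $k > 0$. Adding the two displays gives $2 \mathscr{T}^{[k]}(E_\bullet, g^{F_\bullet}, g^E) \in Q^{S,0}$ for $k \equiv 2 \; (\mathrm{mod} \; 4)$, and hence $\mathscr{T}^{[k]}(E, g^E) = 0 \in Q^S/Q^{S,0}$ by Definition~\ref{def-limtf}.

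The main obstacle is the bookkeeping step of checking that the doubling $(\tau, g^{E_j \oplus \bar E_j}, \ldots)$ genuinely assembles into the real quadruple required by Theorem~\ref{thm-tf-4kplus2}: one must verify the compatibility of $\tau$ with the flat connections, with the embedding $E_{j-1} \oplus \bar E_{j-1} \hookrightarrow E_j \oplus \bar E_j$ and the quotient onto $F_j \oplus \bar F_j$, and with the direct-sum metric, and then identify the underlying real exact sequence as $0 \to (E_{j-1})_\R \to (E_j)_\R \to (F_j)_\R \to 0$ obtained by forgetting the complex structure. Once these compatibilities are spelled out explicitly, the remainder of the argument is purely formal.
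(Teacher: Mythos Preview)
Your argument is correct and shares the paper's overall strategy: pass to the conjugate filtration $\bar E_\bullet = \sigma(E_\bullet)$, relate the torsion forms of the two filtrations, and then invoke Theorem~\ref{thm-ind}/Definition~\ref{def-limtf} (independence of the choice of filtration) to conclude. The one genuine tactical difference lies in how you extract the relation between $\mathscr{T}^{[k]}(g^{E_{j-1}},g^{E_j},g^{F_j})$ and $\mathscr{T}^{[k]}(g^{\bar E_{j-1}},g^{\bar E_j},g^{\bar F_j})$. The paper reopens the computation behind \cite[Thm.~1.8(iv)]{bl} to obtain the pointwise sign identity
\[
\mathscr{T}^{[k]}\big(g^{\bar E_{j-1}},g^{\bar E_j},g^{\bar F_j}\big) = (-1)^{k(k-1)/2}\,\mathscr{T}^{[k]}\big(g^{E_{j-1}},g^{E_j},g^{F_j}\big)
\]
in all degrees, and then feeds both filtrations into Definition~\ref{def-limtf}. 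You instead double the short exact sequence, check that the swap involution $\tau$ makes the doubled data a genuine complexification, and apply Theorem~\ref{thm-tf-4kplus2} as a black box together with additivity under direct sums to get $\mathscr{T}^{[k]}(\mathrm{orig}_j)+\mathscr{T}^{[k]}(\mathrm{conj}_j)=0$ for $k\equiv 2\pmod 4$, which is precisely the paper's sign identity specialized to those degrees. Your route has the minor advantage of citing Theorem~\ref{thm-tf-4kplus2} as stated rather than re-running its proof, at the price of the doubling bookkeeping you flag; the paper's route gives the sharper sign relation on the nose (not merely modulo $Q^{S,0}$) and in every degree, though only the degrees $k\equiv 2\pmod 4$ modulo $Q^{S,0}$ are actually needed.
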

\begin{proof}
Let
\begin{equation}
E = \overline{E}_r \supseteq \overline{E}_{r-1} \supseteq \cdots \supseteq \overline{E}_1 \supseteq \overline{E}_0 = 0
\end{equation}
be the conjugated filtration.
Since $E_\bullet$ is with unitary factors,
so is $\overline{E}_\bullet$.
We denote $\overline{F}_j = \overline{E}_j/\overline{E}_{j-1}$.
Let $g^{\overline{F}_j}$ be the Hermitian metric on $\overline{F}_j$ defined by $g^{\overline{F}_j}(v,v) = g^{F_j}(\overline{v},\overline{v})$.
Since $g^{F_j}$ is a flat Hermitian metric,
so is $\overline{F}_j$.
Let $g^{\overline{E}_j}$ be the restricted metric of $g^E$ on $\overline{E}_j$.
Similarly to Theorem \ref{thm-tf-4kplus2},
the proof of \cite[Thm. 1.8(iv)]{bl} yields
\begin{align}
\label{eq1-pf-thm-limtf-4kplus2}
\begin{split}
& \mathscr{T}^{[k]}\big(g^{\overline{E}_{j-1}},g^{\overline{E}_j},g^{\overline{F}_j}\big) \\
& = \overline{\mathscr{T}^{[k]}\big(g^{E_{j-1}},g^{E_j},g^{F_j}\big)}
= (-1)^{k(k-1)/2} \mathscr{T}^{[k]}\big(g^{E_{j-1}},g^{E_j},g^{F_j}\big) \;.
\end{split}
\end{align}
By Definition \ref{def-limtf}, \eqref{eq-def-Tfilt} and \eqref{eq1-pf-thm-limtf-4kplus2},
for $k>0$,
we have
\begin{align}
\label{eq2-pf-thm-limtf-4kplus2}
\begin{split}
& \mathscr{T}^{[k]}\big(E,g^E\big)
= - \sum_{j=1}^r  \mathscr{T}^{[k]}\big(g^{\overline{E}_{j-1}},g^{\overline{E}_j},g^{\overline{F}_j}\big) \\
& = - (-1)^{k(k-1)/2} \sum_{j=1}^r   \mathscr{T}^{[k]}\big(g^{E_{j-1}},g^{E_j},g^{F_j}\big)
= (-1)^{k(k-1)/2} \mathscr{T}^{[k]}\big(E,g^E\big) \;.
\end{split}
\end{align}
From \eqref{eq2-pf-thm-limtf-4kplus2},
we obtain \eqref{eq-thm-limtf-4kplus2}.
This completes the proof.
\end{proof}

Now we consider an exact sequence of flat complex vector bundles over $S$,
\begin{equation}
\label{eq-Epartial}
(E^\bullet,\partial): \; 0 \rightarrow E^0 \rightarrow E^1 \rightarrow \cdots \rightarrow E^n \rightarrow 0 \;.
\end{equation}
We assume that each $E^k$ is filtered by flat subbundles with unitary factors.

Let $g^{E^\bullet} = \bigoplus_k g^{E^k}$ be a Hermitian metric on $E^\bullet$.
We define
\begin{equation}
\label{eq2-def-limtf}
\mathscr{T}\big(E^\bullet,g^{E^\bullet}\big) = \sum_{k=0}^n (-1)^k \mathscr{T}\big(E^k,g^{E^k}\big) \in Q^S/Q^{S,0} \;.
\end{equation}
By \eqref{intro-fsum}, \eqref{eq-dlimtf} and \eqref{eq2-def-limtf},
we have
\begin{equation}
\label{eq2-dlimtf}
d \mathscr{T}\big(E^\bullet,g^{E^\bullet}\big) = f\big(\n^{E^\bullet},g^{E^\bullet}\big)^{[>1]} \;.
\end{equation}
Let $\mathscr{T}\big(E^\bullet,\partial,g^{E^\bullet}\big)$ be the torsion form of $(E^\bullet,\partial)$
equipped with the metric $g^{E^\bullet}$.
We have
\begin{equation}
\label{eq3-dlimtf}
d \mathscr{T}\big(E^\bullet,\partial,g^{E^\bullet}\big) = f\big(\n^{E^\bullet},g^{E^\bullet}\big) \;.
\end{equation}
From \eqref{eq2-dlimtf} and \eqref{eq3-dlimtf},
we see that $\mathscr{T}\big(E^\bullet,g^{E^\bullet}\big) - \mathscr{T}^{[>0]}\big(E^\bullet,\partial,g^{E^\bullet}\big)$ is closed.

\begin{thm}
\label{thm-limtf}
The following identity holds in $Q^S/Q^{S,0}$,
\begin{equation}
\label{eq-thm-limtf}
\mathscr{T}\big(E^\bullet,g^{E^\bullet}\big) =
\mathscr{T}^{[>0]}\big(E^\bullet,\partial,g^{E^\bullet}\big) \;.
\end{equation}
\end{thm}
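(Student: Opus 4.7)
Since the excerpt has already verified that $\mathscr{T}(E^\bullet, g^{E^\bullet}) - \mathscr{T}^{[>0]}(E^\bullet, \partial, g^{E^\bullet})$ is closed, it remains to show that this difference lies in $Q^{S,0}$. The strategy is to reduce to the special case in which each $E^k$ itself admits a flat Hermitian metric; in that base case, the identity follows from a direct computation coupled with the anomaly formula of \cite[Thm.~A1.4]{bl}.

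First I would handle the base case. Suppose each $E^k$ admits a flat Hermitian metric $g^{F^k}$, and set $g^{F^\bullet} = \bigoplus_k g^{F^k}$. Definition \ref{def-limtf} applied to the length-one filtration $0 \subset E^k$ gives $\mathscr{T}(E^k, g^{E^k}) \equiv \mathscr{T}^{[>0]}(g^{E^k}, g^{F^k}) \pmod{Q^{S,0}}$, so
$$\mathscr{T}(E^\bullet, g^{E^\bullet}) \equiv \sum_k (-1)^k \mathscr{T}^{[>0]}(g^{E^k}, g^{F^k}) \pmod{Q^{S,0}}.$$
On the other hand, the flatness of $g^{F^\bullet}$ forces $\omega^{E^\bullet} \equiv 0$ in \eqref{eq-def-omegaW}, so the operator $X_t$ of \eqref{eq-def-Xt} reduces to $\tfrac{1}{2}(t\partial^* - \partial)$, a bundle endomorphism with no positive-degree form component. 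The integrand of \eqref{eq-def-torsion-form} is then a scalar function on $S$, and hence $\mathscr{T}^{[>0]}(E^\bullet, \partial, g^{F^\bullet}) = 0$. Finally, applying the change-of-metric formula for the Bismut-Lott torsion of an exact sequence (derivable from \cite[Thm.~A1.4]{bl} in the spirit of Step 1 of the proof of Theorem \ref{thm-ind}) yields
$$\mathscr{T}(E^\bullet, \partial, g^{E^\bullet}) - \mathscr{T}(E^\bullet, \partial, g^{F^\bullet}) \equiv \sum_k (-1)^k \mathscr{T}(g^{E^k}, g^{F^k}) \pmod{Q^{S,0}},$$
and extracting the positive-degree part and combining with the displays above concludes the base case.

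For the general case, I would induct on the total filtration length $N = \sum_k r_k$. When $N > n+1$, pick $k_0$ with $r_{k_0} \geq 2$, split off the top unitary factor via the short exact sequence of flat bundles $0 \to E^{k_0}_{r_{k_0}-1} \to E^{k_0} \to F^{k_0}_{r_{k_0}} \to 0$, and apply \cite[Thm.~A1.4]{bl} to the commutative diagram coupling this short exact sequence with $(E^\bullet, \partial)$. The outcome should express the difference $\mathscr{T}(E^\bullet, g^{E^\bullet}) - \mathscr{T}^{[>0]}(E^\bullet, \partial, g^{E^\bullet})$ in $Q^S/Q^{S,0}$ as a combination of analogous differences for complexes of strictly smaller total filtration length, to which the inductive hypothesis applies.

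The main obstacle is the inductive step: replacing $E^{k_0}$ with either $E^{k_0}_{r_{k_0}-1}$ or $F^{k_0}_{r_{k_0}}$ generally destroys the exactness of the ambient complex, so the anomaly formula cannot be invoked naively. A viable workaround is to pass to the mapping cone of the inclusion $E^{k_0}_{r_{k_0}-1} \hookrightarrow E^{k_0}$, which is exact and whose Bismut-Lott torsion can be related to that of $(E^\bullet, \partial)$. Alternatively, one could first refine the filtrations along the lines of Steps 2--4 of the proof of Theorem \ref{thm-ind} so that $\partial$ becomes compatible with the filtration of each $E^k$, and then invoke a filtration-additivity statement for the Bismut-Lott torsion form applied to the resulting filtered exact complex.
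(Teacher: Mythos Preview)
Your base case (each $E^k$ unitary) is fine, but the inductive step has a genuine gap that you yourself flag and do not close. Splitting off a unitary factor from a single $E^{k_0}$ destroys exactness of the ambient complex, so \cite[Thm.~A1.4]{bl} cannot be applied to the resulting diagram. Your first workaround (mapping cone of the inclusion $E^{k_0}_{r_{k_0}-1}\hookrightarrow E^{k_0}$) does not obviously produce an exact complex whose torsion compares to that of $(E^\bullet,\partial)$; the cone is just the quotient in one degree, and you would still need to relate its contribution back to the full complex. Your second workaround (refine filtrations so that $\partial$ becomes filtration-compatible, then invoke a ``filtration-additivity'' statement) is in the right spirit, but that additivity statement is itself the content of the theorem in disguise and would need its own proof via repeated applications of the anomaly formula; you have not indicated how to carry this out.

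The paper sidesteps this difficulty by choosing a different induction parameter: rather than inducting on the total filtration length $\sum_k r_k$, it inducts on the length $n$ of the complex. The base case is $n=2$ (a short exact sequence $0\to E^0\to E^1\to E^2\to 0$), which is where the real work lies: there one can explicitly build a filtration with unitary factors on $E^1$ by concatenating the image of a filtration on $E^0$ with preimages of a filtration on $E^2$, so that the filtrations are compatible with $\partial$; telescoping applications of \cite[Thm.~A1.4]{bl} then give the result. For $n\geqslant 3$, one sets $K^{n-1}=\Ker(E^{n-1}\to E^n)=\im(E^{n-2}\to E^{n-1})$ and uses the shorter exact complex $0\to E^0\to\cdots\to E^{n-2}\to K^{n-1}\to 0$ together with the short exact sequence $0\to K^{n-1}\to E^{n-1}\to E^n\to 0$; both of these remain exact, so the anomaly formula applies cleanly and the induction closes. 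This is the key organizational idea your argument is missing.
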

\begin{proof}
The proof consists of two steps.

\vspace{1.5mm}

\noindent\textbf{Step 1.}
We prove \eqref{eq-thm-limtf} under the assumption that $n=2$, i.e.,
\eqref{eq-Epartial} is given by
\begin{equation}
0 \rightarrow E^0 \xrightarrow{\alpha} E^1 \xrightarrow{\beta} E^2 \rightarrow 0 \;.
\end{equation}

We can find filtrations with unitary factors
\begin{align}
\begin{split}
& E^0 = E^0_r \supseteq \cdots \supseteq E^0_0 = 0 \;,\hspace{5mm}
E^1 = E^1_{r+s} \supseteq \cdots \supseteq E^1_0 = 0 \;,\\
& E^2 = E^2_{r+s} \supseteq \cdots \supseteq E^2_r = 0
\end{split}
\end{align}
such that
\begin{equation}
E^1_j = \alpha(E^0_j) \hspace{2.5mm} \text{for } j=1,\cdots,r \;,\hspace{5mm}
E^2_j = \beta(E^1_j) \hspace{2.5mm} \text{for } j=r+1,\cdots,r+s \;.
\end{equation}
For each $i,j$,
we denote $F^i_j = E^i_j/E^i_{j-1}$.
Then we have
\begin{equation}
F^0_j = F^1_j \hspace{2.5mm} \text{for } j=1,\cdots,r \;,\hspace{5mm}
F^2_j = F^1_j \hspace{2.5mm} \text{for } j=r+1,\cdots,r+s \;.
\end{equation}
For ease of notations,
we denote $F_j = F^1_j$.
For each $j$,
let $g^{F_j}$ be a flat Hermitian metric on $F_j$.
For each $i,j$,
we denote by $g^{E^i_j}$ the restricted metric of $g^{E^i}$ on $E^i_j$.
By Definition \ref{def-limtf}, \eqref{eq-def-Tfilt} and \eqref{eq2-def-limtf},
we have
\begin{align}
\label{eq1a-pf-thm-limtf}
\begin{split}
\mathscr{T}\big(E^\bullet,g^{E^\bullet}\big)
& = \sum_{j=1}^r  \Big( \mathscr{T}^{[>0]}\big(g^{E_{j-1}^1},g^{E_j^1},g^{F_j}\big) - \mathscr{T}^{[>0]}\big(g^{E_{j-1}^0},g^{E_j^0},g^{F_j}\big) \Big) \\
& \hspace{5mm} + \sum_{j=r+1}^{r+s}  \Big( \mathscr{T}^{[>0]}\big(g^{E_{j-1}^1},g^{E_j^1},g^{F_j}\big) - \mathscr{T}^{[>0]}\big(g^{E_{j-1}^2},g^{E_j^2},g^{F_j}\big) \Big) \;.
\end{split}
\end{align}

For $j=1,\cdots,r$,
applying \cite[Theorem A1.4]{bl} to the commutative diagram with exact rows and columns
\begin{align}
\begin{split}
\xymatrix{
0 \ar[r] & E_{j-1}^1 \ar[r] & E_j^1 \ar[r]  & F_j \ar[r] & 0 \\
0 \ar[r] & E_{j-1}^0 \ar[r] \ar[u] & E_j^0 \ar[r] \ar[u] & F_j \ar[r] \ar[u]_{\mathrm{Id}} & 0 \;,}
\end{split}
\end{align}
we obtain
\begin{align}
\begin{split}
& \mathscr{T}\big(g^{E_{j-1}^1},g^{E_j^1},g^{F_j}\big) - \mathscr{T}\big(g^{E_{j-1}^0},g^{E_j^0},g^{F_j}\big) \\
& = \mathscr{T}\big(g^{E_j^0},g^{E_j^1}\big) - \mathscr{T}\big(g^{E_{j-1}^0},g^{E_{j-1}^1}\big)
\hspace{2.5mm} \text{modulo } Q^{S,0} \;.
\end{split}
\end{align}
As a consequence,
we have
\begin{equation}
\label{eq1b-pf-thm-limtf}
\sum_{j=1}^r  \Big( \mathscr{T}\big(g^{E_{j-1}^1},g^{E_j^1},g^{F_j}\big) - \mathscr{T}\big(g^{E_{j-1}^0},g^{E_j^0},g^{F_j}\big) \Big)
= \mathscr{T}\big(g^{E^0},g^{E_r^1}\big)
\hspace{2.5mm} \text{modulo } Q^{S,0} \;.
\end{equation}

For $j=r+1,\cdots,r+s$,
applying \cite[Theorem A1.4]{bl} to the commutative diagram with exact rows and columns
\begin{align}
\begin{split}
\xymatrix{
& 0 & 0 &  & \\
0 \ar[r] & E_{j-1}^2 \ar[r] \ar[u] & E_j^2 \ar[r] \ar[u] & F_j \ar[r] & 0 \\
0 \ar[r] & E_{j-1}^1 \ar[r] \ar[u] & E_j^1 \ar[r] \ar[u] & F_j \ar[r] \ar[u]_{\mathrm{Id}} & 0 \\
& E^0 \ar[r]^{\mathrm{Id}} \ar[u] & E^0 \ar[u] & &  \\
& 0 \ar[u] & 0 \ar[u] & & \;,}
\end{split}
\end{align}
we obtain
\begin{align}
\begin{split}
& \mathscr{T}\big(g^{E_{j-1}^1},g^{E_j^1},g^{F_j}\big) - \mathscr{T}\big(g^{E_{j-1}^2},g^{E_j^2},g^{F_j}\big) \\
& = \mathscr{T}\big(g^{E^0},g^{E_j^1},g^{E_j^2}\big) - \mathscr{T}\big(g^{E^0},g^{E_{j-1}^1},g^{E_{j-1}^2}\big)
\hspace{2.5mm} \text{modulo } Q^{S,0} \;.
\end{split}
\end{align}
As a consequence,
we have
\begin{align}
\label{eq1c-pf-thm-limtf}
\begin{split}
& \sum_{j=r+1}^{r+s}  \Big( \mathscr{T}\big(g^{E_{j-1}^1},g^{E_j^1},g^{F_j}\big) - \mathscr{T}\big(g^{E_{j-1}^2},g^{E_j^2},g^{F_j}\big) \Big) \\
& = \mathscr{T}\big(g^{E^0},g^{E^1},g^{E^2}\big) - \mathscr{T}\big(g^{E^0},g^{E_r^1}\big)
\hspace{2.5mm} \text{modulo } Q^{S,0} \;.
\end{split}
\end{align}
From \eqref{eq1a-pf-thm-limtf}, \eqref{eq1b-pf-thm-limtf}, \eqref{eq1c-pf-thm-limtf},
we obtain
\begin{equation}
\mathscr{T}\big(E^\bullet,g^{E^\bullet}\big) = \mathscr{T}^{[>0]}\big(g^{E^0},g^{E^1},g^{E^2}\big)
\hspace{2.5mm} \text{modulo } Q^{S,0} \;,
\end{equation}
which is equivalent to \eqref{eq-thm-limtf} with $n=2$.

\vspace{1.5mm}

\noindent\textbf{Step 2.}
We prove \eqref{eq-thm-limtf} by induction on $n$.

The cases $n = 1,2$ are proved in Step 1.
In the sequel,
we assume that $n \geqslant 3$.
Let $K^{n-1}\subseteq E^{n-1}$ be the kernel of $E^{n-1}\rightarrow E^n$,
which is also the image of $E^{n-2}\rightarrow E^{n-1}$.
We have an exact sequence
\begin{equation}
(\widetilde{E}^\bullet,\partial): \; 0 \rightarrow E^0 \rightarrow \cdots \rightarrow E^{n-2} \rightarrow K^{n-1} \rightarrow 0 \;.
\end{equation}
Let $g^{K^{n-1}}$ be the restricted metric of $g^{E^{n-1}}$ on $K^{n-1}$.
Let $g^{\widetilde{E}^\bullet}$ be the metric on $\widetilde{E}^\bullet$ defined by $\big(g^{E^i}\big)_{i=0,\cdots,n-2}$ and $g^{K^{n-1}}$.
Let $\mathscr{T}\big(\widetilde{E}^\bullet,\partial,g^{\widetilde{E}^\bullet}\big)$
be the torsion form of $(\widetilde{E}^\bullet,\partial)$ equipped with the metric $g^{\widetilde{E}^\bullet}$.

Applying \cite[Theorem A1.4]{bl} to the commutative diagram with exact rows and columns
\begin{align}
\begin{split}
\xymatrix{
& & & & 0 & & \\
& & & &  E^n \ar[r]^{\mathrm{Id}} \ar[u] & E^n & \\
0 \ar[r] & E^0 \ar[r] & \cdots \ar[r] & E^{n-2} \ar[r] &  E^{n-1} \ar[r] \ar[u] & E^n \ar[r] \ar[u]_{\mathrm{Id}} & 0 \\
0 \ar[r] & E^0 \ar[r] \ar[u]_{\mathrm{Id}} & \cdots \ar[r] & E^{n-2} \ar[u]_{\mathrm{Id}} \ar[r] &  K^{n-1} \ar[r] \ar[u] & 0 &  \\
& & & & 0 \ar[u] & \;, &  }
\end{split}
\end{align}
we obtain
\begin{align}
\label{eq2a-pf-thm-limtf}
\begin{split}
& \mathscr{T}\big(E^\bullet,\partial,g^{E^\bullet}\big) - \mathscr{T}\big(\widetilde{E}^\bullet,\partial,g^{\widetilde{E}^\bullet}\big) \\
& = (-1)^n \mathscr{T}\big(g^{K^{n-1}},g^{E^{n-1}},g^{E^n}\big)
\hspace{2.5mm} \text{modulo } Q^{S,0} \;.
\end{split}
\end{align}
On the other hand,
by Step 1 and \eqref{eq2-def-limtf},
we have
\begin{align}
\label{eq2b-pf-thm-limtf}
\begin{split}
& \mathscr{T}\big(E^\bullet,g^{E^\bullet}\big) - \mathscr{T}\big(\widetilde{E}^\bullet,g^{\widetilde{E}^\bullet}\big) \\
& = (-1)^n \Big( \mathscr{T}\big(K^{n-1},g^{K^{n-1}}\big) - \mathscr{T}\big(E^{n-1},g^{E^{n-1}}\big) + \mathscr{T}\big(E^n,g^{E^n}\big) \Big) \\
& = (-1)^n \mathscr{T}^{[>0]}\big(g^{K^{n-1}},g^{E^{n-1}},g^{E^n}\big)
\hspace{2.5mm} \text{modulo } Q^{S,0} \;.
\end{split}
\end{align}
From \eqref{eq2a-pf-thm-limtf} and \eqref{eq2b-pf-thm-limtf},
we obtain
\begin{align}
\begin{split}
& \mathscr{T}^{[>0]}\big(E^\bullet,\partial,g^{E^\bullet}\big) - \mathscr{T}^{[>0]}\big(\widetilde{E}^\bullet,\partial,g^{\widetilde{E}^\bullet}\big) \\
& = \mathscr{T}\big(E^\bullet,g^{E^\bullet}\big) - \mathscr{T}\big(\widetilde{E}^\bullet,g^{\widetilde{E}^\bullet}\big)
\hspace{2.5mm} \text{modulo } Q^{S,0} \;.
\end{split}
\end{align}
This completes the proof by induction.
\end{proof}

\subsection{Analytic torsion form}
This subsection follows \cite[\textsection III]{bl}.

Let
\begin{equation}
\pi: M \rightarrow S
\end{equation}
be a smooth fibration with compact fiber $Z$.
Here $Z$ may have boundaries.
Let $T^HM \subseteq TM$ be a complement of $TZ$,
which induces the following identification,
\begin{equation}
\label{eq-id-wedge}
\Lambda^\bullet(T^*M)
= \Lambda^\bullet(T^{H,*}M) \otimes \Lambda^\bullet(T^*Z)
\simeq \pi^*\big(\Lambda^\bullet(T^*S)\big) \otimes \Lambda^\bullet(T^*Z) \;.
\end{equation}

Let
\begin{equation}
(F,\n^F)
\end{equation}
be a flat complex vector bundle over $M$.
Set $\mathscr{F}^\bullet = \Omega^\bullet(Z,F)$,
which is a graded complex vector bundle of infinite dimension over $S$.
From \eqref{eq-id-wedge},
we get a formal identity
\begin{equation}
\Omega^\bullet(M,F) = \Omega^\bullet(S,\mathscr{F}^\bullet) \;.
\end{equation}

For $U\in TS$,
let $U^H \in T^HM$ be such that $\pi_*U^H = U$.
For $U\in\smooth(S,TS)$,
let $L_{U^H}$ be the Lie differentiation operator acting on $\Omega^\bullet(M,F)$.
For $U\in\smooth(S,TS)$ and $s\in\Omega^\bullet(S,\mathscr{F}^\bullet) = \Omega^\bullet(M,F)$,
we define
\begin{equation}
\label{eq-def-n-F}
\n^{\mathscr{F}^\bullet}_U s = L_{U^H} s \;.
\end{equation}
Then $\n^{\mathscr{F}^\bullet}$ is a connection on $\mathscr{F}^\bullet$ preserving the grading.

Let $P^{TZ}: TM \rightarrow TZ$ be the projection with respect to the decomposition $TM = T^HM \oplus TZ$.
For $U,V\in\smooth(S,TS)$,
set
\begin{equation}
\label{eq-def-mathcalT}
\mathcal{T}(U,V) = - P^{TZ}[U^H,V^H] \in \smooth(M,TZ) \;.
\end{equation}
Then
\begin{equation}
\mathcal{T}\in\smooth\big(M,\pi^*\big(\Lambda^2(T^*S)\big)\otimes TZ\big) \;.
\end{equation}
Let $i_\mathcal{T}\in\smooth\big(M,\pi^*\big(\Lambda^2(T^*S)\big)\otimes\mathrm{End}\big(\Lambda^\bullet(T^*Z)\big)\big)$
be the interior multiplication by $\mathcal{T}$.

The flat connection $\n^F$ (resp. $\n^F\big|_Z$) naturally extends to an exterior differentiation operator
on $\Omega^\bullet(M,F)$ (resp. $\Omega^\bullet(Z,F) = \mathscr{F}^\bullet$),
which we denote by $d^M$ (resp. $d^Z$).
In the sense of \cite[\textsection II(a)]{bl},
the operator $d^M$ is a superconnection of total degree $1$ on $\mathscr{F}^\bullet$.
By \cite[Prop. 3.4]{bl},
we have
\begin{equation}
\label{eq-dM}
d^M = d^Z + \n^{\mathscr{F}^\bullet} + i_\mathcal{T} \;.
\end{equation}

Let $g^{TZ}$ be a Riemannian metric on $TZ$.
Let $g^F$ be a Hermitian metric on $F$.

We denote $Y = \partial Z$.
If $Y \neq \emptyset$,
we assume that $T^HM$, $g^{TZ}$ and $g^F$ are product on a tubular neighborhood of $Y$ (see \cite[(2.30)-(2.35)]{imrn-zhu}).

Let $g^{\mathscr{F}^\bullet}$ be the $L^2$-metric on $\mathscr{F}^\bullet$ associated with $g^{TZ}, g^F$.
Let $d^{M,*}, d^{Z,*},\n^{\mathscr{F}^\bullet,*}$
be the formal adjoints of  $d^M,d^Z,\n^{\mathscr{F}^\bullet}$ with respect to $g^{\mathscr{F}^\bullet}$
in the sense of \cite[Def. 1.6]{bl}.
By \cite[Prop. 3.7]{bl},
we have
\begin{equation}
d^{M,*} = d^{Z,*} + \n^\mathscr{F,*} - \mathcal{T}^*\!\wedge \;,
\end{equation}
where $\mathcal{T}^*\in\smooth\big(M,\pi^*\big(\Lambda^2(T^*S)\big)\otimes T^*Z\big)$ is the dual of $\mathcal{T}$ with respect to $g^{TZ}$.

Let $N^{TZ}$ be the number operator on $\Lambda^\bullet(T^*Z)$,
i.e., $N^{TZ}\big|_{\Lambda^k(T^*Z)} = k \Id$.
Then $N^{TZ}$ acts on $\mathscr{F}^\bullet$ in the obvious way.
For $t>0$,
let $d^{M,*}_t$ be the formal adjoints of  $d^M$ with respect to the metric $t^{N^{TZ}}g^{\mathscr{F}^\bullet}$.
We have
\begin{equation}
\label{eq-dMstar-t}
d^{M,*}_t = t d^{Z,*} + \n^\mathscr{F,*} - \frac{1}{t} \mathcal{T}^*\!\wedge \;.
\end{equation}
Set
\begin{equation}
\label{eq-D-t}
D_t = \frac{1}{2}\big( d^{M,*}_t - d^M \big) \;.
\end{equation}
By \eqref{eq-dM}, \eqref{eq-dMstar-t} and \eqref{eq-D-t},
we have
\begin{equation}
\label{eq-Dt2}
D_t^2 = - \frac{t}{4} \big(d^{Z,*}d^Z+d^Zd^{Z,*}\big) + \text{positive degree terms} \;.
\end{equation}
Here $d^{Z,*}d^Z+d^Zd^{Z,*}$ is the fiberwise Hodge Laplacian.
If $Y = \partial Z \neq \emptyset$,
we put absolute boundary condition (see \cite[(2.52)]{imrn-zhu}) on $Y$,
then $d^{Z,*}d^Z+d^Zd^{Z,*}$ is self-adjoint.

Let $\mathrm{End}_\mathrm{tr}(\mathscr{F}^\bullet) \subseteq \mathrm{End}(\mathscr{F}^\bullet)$
be the subbundle of trace class operators.
Recall that $f'(z) = (1+2z^2)e^{z^2}$.
By \eqref{eq-Dt2},
we have
\begin{equation}
f'(D_t) \in \Omega^\bullet\big(S,\mathrm{End}_\mathrm{tr}(\mathscr{F}^\bullet)\big) \;.
\end{equation}
Let $\tr: \mathrm{End}_\mathrm{tr}(\mathscr{F}^\bullet) \rightarrow \C$ be the trace map,
which extends to
\begin{equation}
\tr: \Omega^\bullet\big(S,\mathrm{End}_\mathrm{tr}(\mathscr{F}^\bullet)\big) \rightarrow \Omega^\bullet(S) \;.
\end{equation}

Let $H^\bullet(Z,F)$ be the fiberwise cohomology of $Z$ with coefficients in $F$.
Then $H^\bullet(Z,F)$ is a graded complex vector bundle over $S$.
We denote
\begin{equation}
\label{eq-def-chiprim}
\chi'(Z,F) = \sum_{k=0}^{\dim Z} (-1)^kk\, \mathrm{rk}\big(H^k(Z,F)\big) \;.
\end{equation}

Recall that $\varphi$ was defined in \eqref{eq-def-varphi}.
Recall that $Q^S$ was defined in \eqref{intro-Q}.

Bismut and Lott \cite[Def. 3.22]{bl} gave the following definition for $Z$ closed.
Zhu \cite[Def. 2.18]{imrn-zhu} extended the definition to the case $Y = \partial Z \neq \emptyset$.

\begin{defn}
\label{def-atf}
The analytic torsion form associated with $(T^HM,g^{TZ},g^F)$ is defined as
\begin{align}
\label{eq-def-atf}
\begin{split}
& \mathscr{T}\big(T^HM,g^{TZ},g^F\big) \\
& = - \int_0^{+\infty}
\bigg\{ \varphi \tr\Big[(-1)^{N^{TZ}}\frac{N^{TZ}}{2}f'(D_t)\Big]
- \frac{\chi'(Z,F)}{2} \\
& \hspace{30mm} - \Big(\frac{\dim Z \mathrm{rk}(F)\chi(Z)}{4} - \frac{\chi'(Z,F)}{2}\Big)f'\Big(\frac{i\sqrt{t}}{2}\Big) \bigg\}
\frac{dt}{t} \in Q^S \;.
\end{split}
\end{align}
By \cite[Thm 3.21]{bl} \cite[Thm 2.17]{imrn-zhu},
the integrand in \eqref{eq-def-atf} is integrable.
\end{defn}

Let $\n^{TZ}$ be the Bismut connection \cite[Def. 1.6]{b} on $TZ$ associated with $T^HM$ and $g^{TZ}$.
Let $\n^{TY}$ be the Bismut connection on $TY$ associated with $T^HM$ and $g^{TZ}\big|_{TY}$.
Let $\n^{H^\bullet(Z,F)}$ be the canonical flat connection on $H^\bullet(Z,F)$ (see \cite[Def. 2.4]{bl}).
Let $g^{H^\bullet(Z,F)}$ be the $L^2$-metric on $H^\bullet(Z,F)$ associated with $g^{TZ},g^F$.
By \cite[Thm 3.23]{bl} \cite[Thm 2.19]{imrn-zhu},
we have
\begin{align}
\label{eq-dT}
\begin{split}
& d\mathscr{T}\big(T^HM,g^{TZ},g^F\big) \\
& = \int_Z e\big(TZ,\n^{TZ}\big)f\big(\n^F,g^F\big)
+ \frac{1}{2} \int_Y e\big(TY,\n^{TY}\big)f\big(\n^F,g^F\big) \\
& \hspace{65mm} - f\big(\n^{H^\bullet(Z,F)},g^{H^\bullet(Z,F)}\big) \;.
\end{split}
\end{align}

The following theorem is an infinite dimensional analogue of Theorem \ref{thm-tf-4kplus2}.

We will use the notation in \eqref{intro-deg}.

\begin{thm}
\label{thm-atf-4kplus2}
If the triple $(F,\n^F,g^F)$ is the complexification of a triple $(F_\R,\n^{F_\R},g^{F_\R})$,
then
\begin{equation}
\label{eq-thm-atf-4kplus2}
\mathscr{T}^{[k]}\big(T^HM,g^{TZ},g^F\big) = 0 \hspace{2.5mm} \text{for } k \equiv 2 \; (\mathrm{mod} \; 4) \;.
\end{equation}
\end{thm}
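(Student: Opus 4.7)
The plan is to mimic the proof of Theorem \ref{thm-tf-4kplus2} in the present infinite-dimensional setting, using the real structure on $(F,\n^F,g^F)$ to obtain a conjugation identity on the trace appearing in the integrand of \eqref{eq-def-atf}.

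First, I would observe that a real structure on the triple $(F,\n^F,g^F)$ induces a real structure on $\mathscr{F}^\bullet = \Omega^\bullet(Z,F)$ that is compatible with $g^{\mathscr{F}^\bullet}$, with $d^Z$, with $d^{Z,*}$, and with $\n^{\mathscr{F}^\bullet}$ and $\n^{\mathscr{F}^\bullet,*}$. Combined with the fact that $T^HM$, $g^{TZ}$, and the identifications in \eqref{eq-id-wedge} are all defined over $\R$, this shows that the operator $D_t$ of \eqref{eq-D-t} is a real operator acting on the complexified space $\mathscr{F}^\bullet$. Since $N^{TZ}$ is self-adjoint and real, this should be enough to run the proof of \cite[Thm.~1.8(iv)]{bl}.

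Next, I would establish the key conjugation identity: for every $k \geqslant 0$,
\begin{equation}
\overline{\tr\Big[(-1)^{N^{TZ}} \tfrac{N^{TZ}}{2} f'(D_t)\Big]^{[k]}}
= (-1)^{k(k-1)/2}\, \tr\Big[(-1)^{N^{TZ}} \tfrac{N^{TZ}}{2} f'(D_t)\Big]^{[k]} \;.
\end{equation}
This is essentially the same computation as in the finite-dimensional case: the real structure lets one express complex conjugation of the trace in terms of a transpose/adjoint operation on $\mathscr{F}^\bullet$, and the sign $(-1)^{k(k-1)/2}$ comes from commuting $k$ factors of degree one past each other. The only subtlety compared to the finite-dimensional Theorem \ref{thm-tf-4kplus2} is to justify that $f'(D_t)$ is trace class and that the trace commutes with complex conjugation in the required way; this is ensured by the ellipticity of $D_t^2$ fiberwise (cf.\ \eqref{eq-Dt2}) and by $f'(z) = (1+2z^2)e^{z^2}$ being rapidly decaying in $\R$, so that $f'(D_t) \in \Omega^\bullet(S, \mathrm{End}_\mathrm{tr}(\mathscr{F}^\bullet))$ as already noted.

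Finally, observe that for $k \equiv 2 \pmod 4$ one has $(-1)^{k(k-1)/2} = -1$, so the degree-$k$ component of the trace is purely imaginary. The two correction terms $\chi'(Z,F)/2$ and $(\dim Z\, \mathrm{rk}(F)\,\chi(Z)/4 - \chi'(Z,F)/2)f'(i\sqrt{t}/2)$ appearing in Definition \ref{def-atf} lie in $\Omega^0(S)$ and therefore do not contribute to any component of positive degree. Hence the degree-$k$ component of the full integrand in \eqref{eq-def-atf} is purely imaginary for $k \equiv 2 \pmod 4$ and $k > 0$. Since $\mathscr{T}(T^HM,g^{TZ},g^F) \in Q^S$ is real by construction, its degree-$k$ component must vanish, which is exactly \eqref{eq-thm-atf-4kplus2}.

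The main obstacle I anticipate is the careful bookkeeping of the real structure through the horizontal distribution and the formula \eqref{eq-dM} for $d^M$ (in particular the interior multiplication $i_{\mathcal{T}}$), together with verifying that taking complex conjugation commutes with the fiberwise trace on $\mathrm{End}_\mathrm{tr}(\mathscr{F}^\bullet)$. Everything else is a direct transcription of the finite-dimensional argument given in the proof of Theorem \ref{thm-tf-4kplus2}.
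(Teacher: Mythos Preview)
Your proposal is correct and follows essentially the same approach as the paper: the paper's proof simply states that one applies the argument of \cite[Thm.~1.8(iv)]{bl} with $E^\bullet$ replaced by $\mathscr{F}^\bullet$ and $X_t$ by $D_t$, noting that the infinite-dimensional setting causes no difficulty. Your write-up spells out in detail exactly what this entails---the induced real structure on $\mathscr{F}^\bullet$, the conjugation identity with sign $(-1)^{k(k-1)/2}$, and the use of reality of $\mathscr{T}\in Q^S$---which is precisely the content the paper leaves implicit.
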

\begin{proof}
Similarly to Theorem \ref{thm-tf-4kplus2},
we apply the argument in \cite[Thm. 1.8(iv)]{bl} with $E^\bullet$ replaced by $\mathscr{F}^\bullet$ and $X_t$ replaced by $D_t$.
Though $\mathscr{F}^\bullet$ is infinite dimensional,
the argument still works.
\end{proof}

\subsection{Igusa's and Ohrt's axioms}
\label{subsect-axiom}

This subsection follows \cite[\textsection 3]{ig2} and \cite[\textsection 2.2]{ohrt}.

We consider an invariant $\tau$
assigning a cohomology class
\begin{equation}
\tau(M/S,F)\in H^\mathrm{even}(S)
\end{equation}
to any triple $\big(\pi: M \rightarrow S, F, \n^F\big)$ satisfying
\begin{itemize}
\item[-] $\pi: M\rightarrow S$ is a smooth fibration with closed fiber;
\item[-] $(F, \n^F\big)$ is a unitarily flat complex vector bundle over $M$;
\item[-] $H^\bullet(Z,F)$ is filtered by flat subbundles with unitary factors.
\end{itemize}
We will state several axioms.
If $\tau$ satisfies all the axioms,
we call $\tau$ a higher torsion invariant.

For ease of notations,
a triple under consideration will be denoted by $F \rightarrow M \rightarrow S$.

\vspace{2.5mm}

Let $F \rightarrow M \rightarrow S$ be a triple under consideration.
Let $S'$ be a smooth manifold.
Let $\varphi: S' \rightarrow S$ be a smooth map.
Let $F' \rightarrow M' \rightarrow S'$ be the pull-back of $F \rightarrow M \rightarrow S$.

\noindent\textbf{Axiom 1} (naturality)
We have
\begin{equation}
\tau(M'/S',F') = \varphi^* \tau(M/S,F) \in  H^\mathrm{even}(S') \;.
\end{equation}

\vspace{2.5mm}

Let $F_j \rightarrow M_j \rightarrow S$ with $j=1,2$ be two triples with boundaries.
More precisely,
$M_1,M_2$ are allowed to have boundaries,
and the other assumptions still hold.
We assume that there is an diffeomorphism $\varphi: \partial M_1 \rightarrow \partial M_2$.
We further assume that there is an isomorphism between flat vector bundles
\begin{equation}
\phi: F\big|_{\partial M_1} \rightarrow \varphi^*\big(F\big|_{\partial M_2}\big) \;.
\end{equation}
We can glue $F_1$ and $F_2$ together to a unitarily flat complex vector bundle $F : = F_1 \cup_\phi F_2$ over $M := M_1 \cup_\varphi M_2$.
Similarly,
for $j=1,2$,
we construct
\begin{equation}
DM_j = M_j \cup_\mathrm{Id} M_j \;,\hspace{5mm} DF_j = F_j \cup_\mathrm{Id} F_j \;.
\end{equation}
Then $DF_j$ is a unitarily flat complex vector bundle over $DM_j$.

\noindent\textbf{Axiom 2} (additivity)
We have
\begin{equation}
\tau(M/S,F) = \frac{1}{2} \tau(DM_1/S,DF_1) + \frac{1}{2} \tau(DM_2/S,DF_2) \;.
\end{equation}

\vspace{2.5mm}

Let $F \rightarrow M \rightarrow S$ be a triple under consideration.
Let $\xi$ be a real vector bundle of rank $n+1$ over $M$.
Let $q: \mathbb{S}^n(\xi) \rightarrow M$ be the $\mathbb{S}^n$-bundle associated.
More precisely,
for any norm $|\cdot|$ on $\xi$,
the manifold $\mathbb{S}^n(\xi)$ is diffeomorphic to $\big\{ v\in\xi \;:\; |v| = 1 \big\}$.

\noindent\textbf{Axiom 3} (transfer)
We have
\begin{equation}
\tau\big(\mathbb{S}^n(\xi)/S,q^*F\big) =
\chi(\mathbb{S}^n) \tau(M/S,F) + \int_Z e(TZ) \tau\big(\mathbb{S}^n(\xi)/M,q^*F\big) \;,
\end{equation}
where $\chi(\mathbb{S}^n)$ is the Euler characteristic of $\mathbb{S}^n$,
$e(TZ)$ is the Euler class of $TZ$,
and $q^*F$ is the pull-back of $F$.

\vspace{2.5mm}

Let $\mathbb{1} \rightarrow M \rightarrow S$ be a triple under consideration,
where $\mathbb{1}$ is the trivial flat line bundle over $M$.
We will use the notation in \eqref{intro-deg}.

\noindent\textbf{Axiom 4} (triviality)
For $k\in\N$,
we have
\begin{equation}
\tau^{[4k+2]}(M/S,\mathbb{1}) = 0 \in H^{4k+2}(S) \;.
\end{equation}

\vspace{2.5mm}

Let $F_j \rightarrow M \rightarrow S$ with $j=1,\cdots,m$ be a family of triples under consideration.

\noindent\textbf{Axiom 5} (additivity of coefficients)
We have
\begin{equation}
\tau\Big(M/S,\bigoplus_{j=1}^m F_j\Big) = \sum_{j=1}^m \tau(M/S,F_j) \;.
\end{equation}

\vspace{2.5mm}

Let $F \rightarrow M \xrightarrow{\pi} S$ be a triple under consideration.
Assume that there is a finite covering $p: M \rightarrow M_\sim$ and a fibration $\pi_\sim: M_\sim \rightarrow S$ such that $\pi = \pi_\sim \circ p$.

\noindent\textbf{Axiom 6} (induction)
We have
\begin{equation}
\tau(M/S,F) = \tau(M_\sim/S,p_*F) \;,
\end{equation}
where $p_*F$ is the direct image of $F$.

\vspace{2.5mm}

We consider the universal complex line bundle $L$ over $\CP^\infty$.
Let $\alpha\in\C$ be a root of unity.
Let $n$ be a positive integer such that $\alpha^n=1$.
Let $L^{\otimes n}$ be the $n$-th tensor product of $L$.
Let $\mathbb{S}^1(L^{\otimes n})$ be the $\mathbb{S}^1$-bundle associated with $L^{\otimes n}$.
Let $F_\alpha$ be a flat complex line bundle over $\mathbb{S}^1(L^{\otimes n})$ such that
\begin{itemize}
\item[-] the pull-back of $F_\alpha$ to $\mathbb{S}^1(L)$ is a trivial flat complex line bundle;
\item[-] the holonomy of $F_\alpha$ along the fiber of $\mathbb{S}^1(L^{\otimes n}) \rightarrow \CP^\infty$ equals $\alpha$.
\end{itemize}

Let $\tau_k\big(\mathbb{S}^1(L^{\otimes n})/\CP^\infty,F_\alpha\big)$ be
the component of $\tau\big(\mathbb{S}^1(L^{\otimes n})/\CP^\infty,F_\alpha\big)$ of degree $2k$.

We identify $\Q/\Z$ with the roots of unity in $\C$.

\noindent\textbf{Axiom 7} (continuity)
For $k\in\N$,
the map
\begin{align}
\label{eq-a7}
\begin{split}
\Q/\Z & \rightarrow H^{2k}(\CP^\infty) = \R \\
\alpha & \mapsto \frac{1}{n^{k}} \tau_k\big(\mathbb{S}^1(L^{\otimes n})/\CP^\infty,F_\alpha\big)
\end{split}
\end{align}
is continuous.
Here,
to show that the map \eqref{eq-a7} is well-defined,
we need Axiom 1 and the following fact:
let $\varphi: \CP^\infty \rightarrow \CP^\infty$ be such that $\varphi^*L \simeq L^{\otimes n}$,
then the pull-back map $\varphi^*: H^{2k}(\CP^\infty) \rightarrow H^{2k}(\CP^\infty)$ equals $n^k\mathrm{Id}$.

\section{Analytic torsion class}

\subsection{Construction}
\label{subsect-con}

Let
\begin{equation}
\pi: M \rightarrow S
\end{equation}
be a smooth fibration with compact fiber.
For $s\in S$,
we denote $Z_s = \pi^{-1}(s)$.
We will omit the index $s$ when we refer to the generic fiber.
Here $Z$ may have boundaries.
Let
\begin{equation}
(F,\n^F)
\end{equation}
be a unitarily flat complex vector bundle over $M$.
Let $H^\bullet(Z,F)$ be the fiberwise cohomology with coefficients in $F$.
Let $\n^{H^\bullet(Z,F)}$ be the canonical flat connection on $H^\bullet(Z,F)$.
We assume that each $H^k(Z,F)$ is filtered by flat subbundles with unitary factors.

Let $T^HM \subseteq TM$ be a complement of $TZ$.
Let $g^{TZ}$ be a Riemannian metric on $TZ$.
Let $g^F$ be a flat Hermitian metric on $F$.
Let $\mathscr{T}\big(T^HM,g^{TZ},g^F\big)$ be the Bismut-Lott analytic torsion form,
which we view as an element in $Q^S/Q^{S,0}$.
Let $g^{H^\bullet(Z,F)}$ be the $L^2$-metric on $H^\bullet(Z,F)$ associated with $g^{TZ},g^F$.
From \eqref{eq-dT} and the assumptions above,
we get
\begin{equation}
\label{eq-datf}
d \mathscr{T}\big(T^HM,g^{TZ},g^F\big) = - f\big(\n^{H^\bullet(Z,F)},g^{H^\bullet(Z,F)}\big) \;.
\end{equation}

Let $\mathscr{T}\big(\n^{H^\bullet(Z,F)},g^{H^\bullet(Z,F)}\big)$ be as in \eqref{eq2-def-limtf}.
By \eqref{eq2-dlimtf},
we have
\begin{equation}
d \mathscr{T}\big(\n^{H^\bullet(Z,F)},g^{H^\bullet(Z,F)}\big) = f\big(\n^{H^\bullet(Z,F)},g^{H^\bullet(Z,F)}\big)^{[>1]} \;.
\end{equation}
We define
\begin{equation}
\label{eq-def-tcl}
\mathscr{T}_\mathrm{cl}\big(T^HM,g^{TZ},g^F\big) =
\mathscr{T}^{[>0]}\big(T^HM,g^{TZ},g^F\big) + \mathscr{T}\big(\n^{H^\bullet(Z,F)},g^{H^\bullet(Z,F)}\big) \;.
\end{equation}
From \eqref{eq-datf}-\eqref{eq-def-tcl},
we get
\begin{equation}
\label{eq-def-dtcl}
d \mathscr{T}_\mathrm{cl}\big(T^HM,g^{TZ},g^F\big) = 0 \;.
\end{equation}

\begin{defn}
\label{def-tclass}
The analytic torsion class of $(\pi: M \rightarrow S, F)$ is defined as
\begin{equation}
\tau^\mathrm{BL}(M/S,F) = \Big[ \mathscr{T}_\mathrm{cl}\big(T^HM,g^{TZ},g^F\big) \Big] \in H^{\mathrm{even}\geqslant 2}(S) \;.
\end{equation}
A standard argument using the functoriality and the closedness of $\mathscr{T}_\mathrm{cl}\big(T^HM,g^{TZ},g^F\big)$
shows that $\tau^\mathrm{BL}(M/S,F)$ is independent of $T^HM,g^{TZ},g^F$.
\end{defn}

If $H^\bullet(Z,F)$ is unitary,
Definition \ref{def-tclass} is equivalent to \cite[Def. 2.8]{g}.

\subsection{Additivity}
\label{subsect-add}

In this subsection,
we use the notations in \textsection \ref{subsect-con}.
And we assume that $Z$ is closed.

Let $N\subseteq M$ be a hypersurface cutting $M$ into two pieces,
which we denote by $M_1',M_2'$.
Assume that $\pi\big|_N: N \rightarrow S$ is surjective and $N$ is transversal to $Z_s$ for any $s\in S$.
Then $\pi\big|_N: N \rightarrow S$ is a fibration.
Let $N \subseteq U\subseteq M$ be a tubular neighborhood such that
$\pi\big|_U: U \rightarrow S$ is isomorphic to the fibration $\pi\big|_N \circ \mathrm{pr}_2: (-1,1)\times N \rightarrow S$.
Set
\begin{equation}
M_1 = M_1' \cup \overline{U} \;,\hspace{5mm}
M_2 = M_2' \cup \overline{U} \;,\hspace{5mm}
M_3 = \overline{U} \;.
\end{equation}
For $j=1,2,3$,
we have a fibration
\begin{equation}
\pi_j := \pi\big|_{M_j}: M_j \rightarrow S \;.
\end{equation}
For $s\in S$,
we denote $Z_{j,s} = \pi_j^{-1}(s)$.
For convenience,
we will use the notations $M_0 = M$, $Z_{0,s} = Z_s$, $\pi_0 = \pi$ etc.

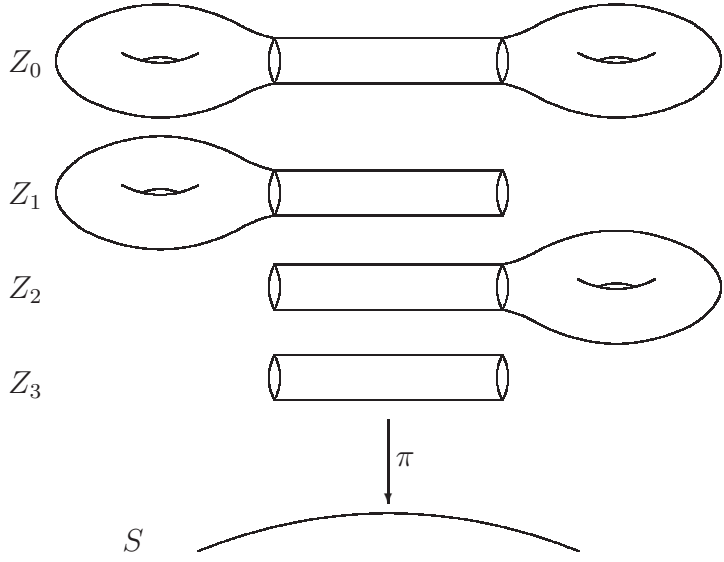
\begin{figure}[h]
\setlength{\unitlength}{5mm}
\centering
\begin{picture}(18,15)
\qbezier(1,12)(3,11)(5,12) 
\qbezier(1,14)(3,15)(5,14) 
\qbezier(1,12)(-0.5,13)(1,14) 
\qbezier(2,13.2)(3,12.7)(4,13.2) 
\qbezier(2.5,13)(3,13.2)(3.5,13) 
\qbezier(5,12)(5.5,12.3)(6,12.4) 
\qbezier(5,14)(5.5,13.7)(6,13.6) 
\qbezier(13,12)(15,11)(17,12) 
\qbezier(13,14)(15,15)(17,14) 
\qbezier(17,12)(18.5,13)(17,14) 
\qbezier(14,13.2)(15,12.7)(16,13.2) 
\qbezier(14.5,13)(15,13.2)(15.5,13) 
\qbezier(13,12)(12.5,12.3)(12,12.4) 
\qbezier(13,14)(12.5,13.7)(12,13.6) 
\qbezier(6,12.4)(9,12.4)(12,12.4) 
\qbezier(6,13.6)(9,13.6)(12,13.6) 
\qbezier(6,12.4)(5.7,13)(6,13.6) 
\qbezier[30](6,12.4)(6.3,13)(6,13.6) 
\qbezier(12,12.4)(11.7,13)(12,13.6) 
\qbezier[30](12,12.4)(12.3,13)(12,13.6) 
\qbezier(1,8.5)(3,7.5)(5,8.5) 
\qbezier(1,10.5)(3,11.5)(5,10.5) 
\qbezier(1,8.5)(-0.5,9.5)(1,10.5) 
\qbezier(2,9.7)(3,9.2)(4,9.7) 
\qbezier(2.5,9.5)(3,9.7)(3.5,9.5) 
\qbezier(5,8.5)(5.5,8.8)(6,8.9) 
\qbezier(5,10.5)(5.5,10.2)(6,10.1) 
\qbezier(6,8.9)(9,8.9)(12,8.9) 
\qbezier(6,10.1)(9,10.1)(12,10.1) 
\qbezier(6,8.9)(5.7,9.5)(6,10.1) 
\qbezier[30](6,8.9)(6.3,9.5)(6,10.1) 
\qbezier(12,8.9)(11.7,9.5)(12,10.1) 
\qbezier(12,8.9)(12.3,9.5)(12,10.1) 
\qbezier(13,6)(15,5)(17,6) 
\qbezier(13,8)(15,9)(17,8) 
\qbezier(17,6)(18.5,7)(17,8) 
\qbezier(14,7.2)(15,6.7)(16,7.2) 
\qbezier(14.5,7)(15,7.2)(15.5,7) 
\qbezier(13,6)(12.5,6.3)(12,6.4) 
\qbezier(13,8)(12.5,7.7)(12,7.6) 
\qbezier(6,6.4)(9,6.4)(12,6.4) 
\qbezier(6,7.6)(9,7.6)(12,7.6) 
\qbezier(6,6.4)(5.7,7)(6,7.6) 
\qbezier[30](6,6.4)(6.3,7)(6,7.6) 
\qbezier(12,6.4)(11.7,7)(12,7.6) 
\qbezier[30](12,6.4)(12.3,7)(12,7.6) 
\qbezier(6,4)(9,4)(12,4) 
\qbezier(6,5.2)(9,5.2)(12,5.2) 
\qbezier(6,4)(5.7,4.6)(6,5.2) 
\qbezier[30](6,4)(6.3,4.6)(6,5.2) 
\qbezier(12,4)(11.7,4.6)(12,5.2) 
\qbezier(12,4)(12.3,4.6)(12,5.2) 
\qbezier(4,0)(9,2)(14,0)
\put(-1,12.7){$Z_0$}
\put(-1,9.2){$Z_1$}
\put(-1,6.7){$Z_2$}
\put(-1,4.2){$Z_3$}
\put(2,0){$S$}
\put(9,3.5){\vector(0,-1){2.25}} \put(9.2,2.25){$\pi$}
\end{picture}
\caption{fibrations $\pi_j: M_j \rightarrow S$ with $j=0,1,2,3$}
\label{fig-gluing}
\end{figure}

For $j=0,1,2,3$,
the fiberwise cohomology $H^\bullet(Z_j,F)$ is a graded flat complex vector bundle over $S$.
We assume that each $H^k(Z_j,F)$ is filtered by flat subbundles with unitary factors.
Then
\begin{equation}
\tau^\mathrm{BL}(M_j/S,F) \in H^{\mathrm{even}\geqslant 2}(S) \hspace{2.5mm} \text{with } j=0,1,2,3
\end{equation}
are well-defined.

\begin{thm}
\label{thm-gluing}
The following identity holds,
\begin{equation}
\label{eq-thm-gluing}
\tau^\mathrm{BL}(M/S,F) =
\tau^\mathrm{BL}(M_1/S,F) + \tau^\mathrm{BL}(M_2/S,F) - \tau^\mathrm{BL}(M_3/S,F) \;.
\end{equation}
\end{thm}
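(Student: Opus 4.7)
The plan is to deduce the identity from the higher gluing formula for Bismut--Lott torsion forms established in our previous work \cite{pzz2}, combined with the Mayer--Vietoris identification of the fiberwise cohomology bundles and the filtration-torsion formalism of \textsection\ref{subsect-lim-tf}.

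First, I would choose a horizontal distribution $T^HM$, a fiberwise metric $g^{TZ}$ and the flat Hermitian metric $g^F$ that are product on the tubular neighborhood $U$ of $N$. This is possible because $N$ is transverse to every fiber $Z_s$, so the product structure of $U \simeq (-1,1)\times N$ is compatible with the fibration. The restriction of these data to each $M_j$ ($j=0,1,2,3$) then satisfies the product-near-boundary condition required by Zhu's extension \cite{imrn-zhu}, so the analytic torsion forms
\begin{equation}
\mathscr{T}_j := \mathscr{T}\big(T^HM_j, g^{TZ_j}, g^F|_{M_j}\big), \qquad j=0,1,2,3,
\end{equation}
and the corresponding $L^2$-metrics $g^{H^\bullet(Z_j,F)}$ on $H^\bullet(Z_j,F)$ are simultaneously defined.

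Second, I would invoke the higher gluing formula of \cite{pzz2}, which under these product assumptions yields an equality, modulo $Q^{S,0}$, of the form
\begin{equation}
\mathscr{T}_0 \equiv \mathscr{T}_1 + \mathscr{T}_2 - \mathscr{T}_3 + \mathscr{T}^{\mathrm{MV}},
\end{equation}
where $\mathscr{T}^{\mathrm{MV}}$ is the torsion form of the Mayer--Vietoris long exact sequence
\begin{equation}
\cdots \to H^\bullet(Z_0,F) \to H^\bullet(Z_1,F)\oplus H^\bullet(Z_2,F) \to H^\bullet(Z_3,F) \to H^{\bullet+1}(Z_0,F) \to \cdots
\end{equation}
of flat complex vector bundles over $S$, equipped with the $L^2$-metrics $g^{H^\bullet(Z_j,F)}$.

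Third, taking the positive-degree part of this identity and using Theorem \ref{thm-limtf} applied to the Mayer--Vietoris sequence (each term of which is filtered by flat subbundles with unitary factors, by hypothesis), I would rewrite
\begin{equation}
\mathscr{T}^{\mathrm{MV},[>0]} \equiv \mathscr{T}\big(\nabla^{H^\bullet(Z_0,F)},g^{H^\bullet(Z_0,F)}\big) - \sum_{j=1}^{2}\mathscr{T}\big(\nabla^{H^\bullet(Z_j,F)},g^{H^\bullet(Z_j,F)}\big) + \mathscr{T}\big(\nabla^{H^\bullet(Z_3,F)},g^{H^\bullet(Z_3,F)}\big)
\end{equation}
modulo $Q^{S,0}$, the signs being read off from the alternation of the long exact sequence. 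Substituting this into the previous identity and invoking the definition \eqref{eq-def-tcl} of $\mathscr{T}_\mathrm{cl}$ gives, modulo $Q^{S,0}$, the equality
\begin{equation}
\mathscr{T}_\mathrm{cl}\big(T^HM,g^{TZ},g^F\big) \equiv \mathscr{T}_\mathrm{cl}^{(1)} + \mathscr{T}_\mathrm{cl}^{(2)} - \mathscr{T}_\mathrm{cl}^{(3)},
\end{equation}
which passes to cohomology by Definition \ref{def-tclass} to yield \eqref{eq-thm-gluing}.

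The main obstacle is purely bookkeeping: making sure that the signs in the Mayer--Vietoris long exact sequence precisely match the differences $\tau^\mathrm{BL}(M_1/S,F)+\tau^\mathrm{BL}(M_2/S,F)-\tau^\mathrm{BL}(M_3/S,F)$, and that the Mayer--Vietoris correction produced by \cite{pzz2} (which a priori involves both the gluing geometry and the analytic spectral contribution near $N$) reduces, after the product-near-$N$ choice, to exactly the torsion form of the cohomological Mayer--Vietoris sequence. The hard analytic content---the behavior of the small and large time limits of the superconnection near the gluing locus---is entirely absorbed into the input theorem from \cite{pzz2}, so the present argument is essentially an algebraic manipulation of torsion forms using Theorem \ref{thm-limtf}.
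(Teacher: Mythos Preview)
Your approach is essentially identical to the paper's: invoke the gluing formula \cite[Thm.~0.1]{pzz2} for the analytic torsion forms, then use Theorem~\ref{thm-limtf} and \eqref{eq2-def-limtf} to rewrite the Mayer--Vietoris torsion form in terms of the filtration torsions $\mathscr{T}\big(H^\bullet(Z_j,F),g^{H^\bullet(Z_j,F)}\big)$, and conclude via \eqref{eq-def-tcl} and Definition~\ref{def-tclass}.

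One correction to the bookkeeping you flagged: the sign in your second step is off. The formula from \cite{pzz2} reads, in the paper's notation,
\[
\mathscr{T}_\mathscr{H} + \sum_{j=0}^3 (-1)^{j(j-3)/2}\,\mathscr{T}_j \equiv 0 \quad \text{modulo } Q^{S,0},
\]
i.e.\ $\mathscr{T}_0 \equiv \mathscr{T}_1 + \mathscr{T}_2 - \mathscr{T}_3 - \mathscr{T}^{\mathrm{MV}}$, not $+\mathscr{T}^{\mathrm{MV}}$. With that sign, your third-step identity (which is correct as stated) combines with it to give exactly $\mathscr{T}_{\mathrm{cl},0} \equiv \mathscr{T}_{\mathrm{cl},1} + \mathscr{T}_{\mathrm{cl},2} - \mathscr{T}_{\mathrm{cl},3}$; with your sign the correction terms do not cancel.
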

\begin{proof}
For $j=0,1,2,3$,
let $T^HM_j\subseteq TM_j$ be the restriction of $T^HM$ to $M_j$,
let $g^{TZ_j}$ be the restricted metric of $g^{TZ}$ on $TZ_j$.
Let
\begin{equation}
\mathscr{T}\big(T^HM_j,g^{TZ_j},g^F\big) \in Q^S
\end{equation}
be the Bismut-Lott analytic torsion form of $\big(\pi_j: M_j \rightarrow S, F\big)$.

We consider the Mayer-Vietoris exact sequence
\begin{equation}
\label{eq-mv-es}
\cdots \rightarrow H^k(Z,F) \rightarrow H^k(Z_1,F)  \oplus H^k(Z_2,F)  \rightarrow H^k(Z_3,F)  \rightarrow \cdots \;,
\end{equation}
which is an exact sequence of flat complex vector bundles over $S$.
For $j=0,1,2,3$,
let $g^{H^\bullet(Z_j,F)}$ be the $L^2$-metric on $H^\bullet(Z_j,F)$ associated with $g^{TZ_j},g^F$.
Let
\begin{equation}
\mathscr{T}_\mathscr{H} \in Q^S
\end{equation}
be the torsion form of the exact sequence \eqref{eq-mv-es} equipped with metrics $\big(g^{H^\bullet(Z_j,F)}\big)_{j=0,1,2,3}$.

By \cite[Thm. 0.1]{pzz2},
we have
\begin{equation}
\label{eq4-pf-thm-gluing}
\mathscr{T}_\mathscr{H} + \sum_{j=0}^3 (-1)^{j(j-3)/2} \mathscr{T}\big(T^HM_j,g^{TZ_j},g^F\big) = 0
\hspace{2.5mm} \text{modulo } Q^{S,0} \;.
\end{equation}
On the other hand,
by Theorem \ref{thm-limtf} and \eqref{eq2-def-limtf},
we have
\begin{equation}
\label{eq5-pf-thm-gluing}
\mathscr{T}_\mathscr{H}^{[>0]} = \sum_{j=0}^3 (-1)^{j(j-3)/2} \mathscr{T}\big(H^\bullet_j,g^{H^\bullet_j}\big)
\hspace{2.5mm} \text{modulo } Q^{S,0} \;.
\end{equation}
By Definition \ref{def-tclass} and \eqref{eq-def-tcl},
we have
\begin{equation}
\label{eq6-pf-thm-gluing}
\tau^\mathrm{BL}(M_j/S,F) =
\Big[ \mathscr{T}^{[>0]}\big(T^HM_j,g^{TZ_j},g^F\big) + \mathscr{T}\big(H^\bullet(Z_j,F),g^{H^\bullet(Z_j,F)}\big) \Big] \;.
\end{equation}
From \eqref{eq4-pf-thm-gluing}-\eqref{eq6-pf-thm-gluing},
we obtain \eqref{eq-thm-gluing}.
This completes the proof.
\end{proof}

\subsection{Transfer and induction}
In this subsection,
we use the notations in \textsection \ref{subsect-con}.
And we assume that $Z$ is closed.

Let $\xi$ be a real vector bundle of rank $n+1$ over $M$.
Let
\begin{equation}
q: \mathbb{S}^n(\xi) \rightarrow M
\end{equation}
be the $\mathbb{S}^n$-bundle associated.
We denote by $X$ the fiber of $\pi\circ q: \mathbb{S}^n(\xi) \rightarrow S$.
We denote by $Y$ the fiber of $q: \mathbb{S}^n(\xi) \rightarrow M$.
Recall that $Z$ is the fiber of $\pi: M \rightarrow S$.
The notations are summarized in the following commutative diagram,
\begin{equation}
\xymatrix{
q^*F \ar[r] & \mathbb{S}^n(\xi) \ar[dr]^X \ar[d]_Y &  \\
F \ar[r] & M \ar[r]_Z & S \;.}
\end{equation}

The fiberwise cohomology $H^\bullet(Y,q^*F)$ is a graded flat complex vector bundle over $M$.
Since $Y\simeq\mathbb{S}^n$,
we have
\begin{equation}
\label{eqH-pf-thm-transfer}
H^0(Y,q^*F) = H^n(Y,q^*F) = F \;,\hspace{5mm}
H^k(Y,q^*F) = 0 \hspace{2.5mm} \text{for } k\neq 0,n \;.
\end{equation}
The fiberwise cohomologies $H^\bullet\big(Z,H^\bullet(Y,q^*F)\big)$ and $H^\bullet(X,q^*F)$ are graded flat complex vector bundles over $S$.
Moreover,
we have
\begin{align}
\label{eqH2-pf-thm-transfer}
\begin{split}
H^k(X,q^*F)
& = H^k\big(Z,H^0(Y,q^*F)\big) \oplus H^{k-n}\big(Z,H^n(Y,q^*F)\big) \\
& = H^k(Z,F) \oplus H^{k-n}(Z,F) \;.
\end{split}
\end{align}
From \eqref{eqH-pf-thm-transfer} and \eqref{eqH2-pf-thm-transfer},
we see that both $H^\bullet(Y,q^*F)$ and $H^\bullet(X,q^*F)$ are filtered by flat subbundles with unitary factors.
Hence
\begin{equation}
\tau^\mathrm{BL}\big(\mathbb{S}^n(\xi)/M,q^*F\big) \in H^{\mathrm{even}\geqslant 2}(M) \;,\hspace{5mm}
\tau^\mathrm{BL}\big(\mathbb{S}^n(\xi)/S,q^*F\big) \in H^{\mathrm{even}\geqslant 2}(S)
\end{equation}
are well-defined.

The following theorem is a direct consequence of \cite[Thm. 0.1]{ma}.

\begin{thm}
\label{thm-transfer}
The following identity holds,
\begin{equation}
\label{eq-thm-transfer}
\tau^\mathrm{BL}\big(\mathbb{S}^n(\xi)/S,q^*F\big) =
\chi(\mathbb{S}^n) \tau^\mathrm{BL}(M/S,F) + \int_Z e(TZ) \tau^\mathrm{BL}\big(\mathbb{S}^n(\xi)/M,q^*F\big) \;.
\end{equation}
\end{thm}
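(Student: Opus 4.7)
The strategy is to apply Ma's formula \cite[Thm. 0.1]{ma} for the analytic torsion form under the composition of submersions, exploit the degeneracy of the Leray spectral sequence attached to a sphere bundle, and then bookkeep the filtration correction terms through Theorem \ref{thm-limtf}.

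First I pick compatible geometric data for the three fibrations $q$, $\pi$ and $\pi\circ q$. Fix a Euclidean metric on $\xi$; this determines $g^{TY}$ and, via horizontal lifting, a horizontal distribution $T^H\mathbb{S}^n(\xi)/M \subset T\mathbb{S}^n(\xi)$ for $q$. Combining with $T^HM$ and $g^{TZ}$ yields a horizontal distribution for $\pi\circ q$, whose fiber $X$ is itself fibered as $Y\to X\to Z$. Let $g^{q^*F}$ be the pull-back flat Hermitian metric. Ma's theorem then gives, modulo $Q^{S,0}$,
\begin{equation*}
\mathscr{T}\bigl(T^H\mathbb{S}^n(\xi)/S,g^{TX},g^{q^*F}\bigr) \equiv \mathscr{T}\bigl(T^HM,g^{TZ},g^{H^\bullet(Y,q^*F)}\bigr) + \int_Z e(TZ,\n^{TZ})\,\mathscr{T}\bigl(T^H\mathbb{S}^n(\xi)/M,g^{TY},g^{q^*F}\bigr) + \mathscr{T}_{\mathrm{Leray}},
\end{equation*}
where $\mathscr{T}_{\mathrm{Leray}}$ is the Bismut-Lott torsion form of the exact sequence of flat Hermitian vector bundles relating $H^\bullet(X,q^*F)$ with the $E_2$-term $H^\bullet\bigl(Z,H^\bullet(Y,q^*F)\bigr)$.

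Second, I exploit the simple fiber. By \eqref{eqH-pf-thm-transfer}, $H^\bullet(Y,q^*F) = F \oplus F[-n]$ as graded flat Hermitian vector bundles over $M$, and by the alternating-sum convention \eqref{intro-fsum},
\begin{equation*}
\mathscr{T}\bigl(T^HM,g^{TZ},g^{H^\bullet(Y,q^*F)}\bigr) = \bigl(1+(-1)^n\bigr)\,\mathscr{T}\bigl(T^HM,g^{TZ},g^F\bigr) = \chi(\mathbb{S}^n)\,\mathscr{T}\bigl(T^HM,g^{TZ},g^F\bigr).
\end{equation*}
By \eqref{eqH2-pf-thm-transfer}, the Leray spectral sequence is $E_2$-degenerate, and $\mathscr{T}_{\mathrm{Leray}}$ reduces to the torsion form of a split short exact sequence comparing the filtration on $H^\bullet(X,q^*F)$ with the direct sum $H^\bullet(Z,F) \oplus H^{\bullet-n}(Z,F)$.

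Third, I pass to torsion classes via Definition \ref{def-tclass} and \eqref{eq-def-tcl}: each $\tau^\mathrm{BL}$ equals the class of $\mathscr{T}^{[>0]}$ plus the filtration-torsion correction $\mathscr{T}(H^\bullet,g^{H^\bullet})$ from \eqref{eq2-def-limtf}. The positive-degree parts of the analytic torsion forms combine exactly as in the displayed identity above. For the correction terms, I choose filtrations of $H^\bullet(X,q^*F)$, $H^\bullet(Y,q^*F)$ and $H^\bullet(Z,F)$ compatibly with the direct-sum decompositions, and invoke Theorem \ref{thm-limtf} to identify the filtration torsion of $H^\bullet(X,q^*F)$ with $\chi(\mathbb{S}^n)$ times that of $H^\bullet(Z,F)$ plus $\int_Z e(TZ)$ times that of $H^\bullet(Y,q^*F)$, absorbing $\mathscr{T}_{\mathrm{Leray}}^{[>0]}$ into the identification. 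Collecting terms yields \eqref{eq-thm-transfer}. The principal obstacle is the bookkeeping in this last step, namely tracking the $(-1)^n$ signs and showing that the filtration-torsion correction on the left matches the filtration-torsion corrections on the right plus $\mathscr{T}_{\mathrm{Leray}}^{[>0]}$ modulo $Q^{S,0}$; the $E_2$-degeneracy afforded by the sphere fiber is exactly what makes this feasible.
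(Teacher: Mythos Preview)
Your approach is essentially the same as the paper's: apply Ma's functoriality theorem to the tower $\mathbb{S}^n(\xi)\to M\to S$, use the identification $H^\bullet(Y,q^*F)\cong F\oplus F[-n]$ to rewrite the middle analytic torsion form as $\chi(\mathbb{S}^n)\,\mathscr{T}(T^HM,g^{TZ},g^F)$, and then invoke Theorem~\ref{thm-limtf} to convert the Leray torsion term into the difference of filtration corrections. The paper carries out exactly this programme.

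Two points where the paper is sharper than your sketch. First, the identification $g^{H^\bullet(Y,q^*F)}=g^F$ as \emph{Hermitian} bundles requires normalizing the fiberwise volume of $Y$ to $1$; the paper makes this explicit, and without it your equation $\mathscr{T}(T^HM,g^{TZ},g^{H^\bullet(Y,q^*F)})=\chi(\mathbb{S}^n)\,\mathscr{T}(T^HM,g^{TZ},g^F)$ would acquire an extra term. Second, your final bookkeeping claim that the filtration torsion of $H^\bullet(X,q^*F)$ equals $\chi(\mathbb{S}^n)$ times that of $H^\bullet(Z,F)$ \emph{plus $\int_Z e(TZ)$ times that of $H^\bullet(Y,q^*F)$} is not what Theorem~\ref{thm-limtf} delivers: there is no mechanism producing an Euler-class integral inside a purely finite-dimensional torsion identity. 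What actually happens is that $\mathscr{T}(H^\bullet(Y,q^*F),g^{H^\bullet(Y,q^*F)})=0$ because $g^F$ is flat (so the correction for $\tau^\mathrm{BL}(\mathbb{S}^n(\xi)/M,q^*F)$ vanishes identically), and Theorem~\ref{thm-limtf} applied to the metric-comparison sequence gives $\mathscr{T}_{\mathrm{Leray}}^{[>0]}=\chi(\mathbb{S}^n)\,\mathscr{T}(H^\bullet(Z,F))-\mathscr{T}(H^\bullet(X,q^*F))$ modulo $Q^{S,0}$. With these two clarifications your argument is complete and matches the paper.
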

\begin{proof}
Let $g^{TX},g^{TY}$ be Riemannian metrics on $TX,TY$.
Recall that $g^{TZ}$ is a Riemannian metric on $TZ$.
Recall that $g^F$ is a flat Hermitian metric on $F$.
\begin{itemize}
\item[-] Let $g^{H^\bullet(X,q^*F)}$ be the $L^2$-metric on $H^\bullet(X,q^*F)$ associated with $g^{TX}$, $q^*g^F$.
\item[-] Let $g^{H^\bullet(Y,q^*F)}$ be the $L^2$-metric on $H^\bullet(Y,q^*F)$ associated with $g^{TY}$, $q^*g^F$.
\item[-] Let $g^{H^\bullet(Z,H^\bullet(Y,q^*F))}$ be the $L^2$-metric on $H^\bullet\big(Z,H^\bullet(Y,q^*F)\big)$ associated with $g^{TZ}$, $g^{H^\bullet(Y,q^*F)}$.
\item[-] Recall that $g^{H^\bullet(Z,F)}$ is the $L^2$-metric on $H^\bullet(Z,F)$ associated with $g^{TZ}$, $g^F$.
\end{itemize}
We assume that the volume of any fiber $Y$ with respect to $g^{TY}$ equals $1$.
Then,
under the isomorphism \eqref{eqH-pf-thm-transfer},
we have
\begin{equation}
\label{eqH3-pf-thm-transfer}
g^{H^0(Y,q^*F)} = g^{H^n(Y,q^*F)} = g^F
\end{equation}
and
\begin{equation}
\label{eqH4-pf-thm-transfer}
g^{H^\bullet(Z,H^0(Y,q^*F))} = g^{H^\bullet(Z,H^n(Y,q^*F))} = g^{H^\bullet(Z,F)} \;.
\end{equation}

Under the isomorphism \eqref{eqH2-pf-thm-transfer},
we may view $g^{H^\bullet(Z,H^0(Y,q^*F))} \oplus g^{H^{\bullet-n}(Z,H^n(Y,q^*F))}$ as a metric on $H^\bullet(X,q^*F)$.
Recall that $\mathscr{T}(\cdot,\cdot)$ was defined in the paragraph containing \eqref{eq-EE}.
For a graded flat complex vector bundle $(E^\bullet,\n^{E^\bullet})$ over $S$
and Hermitian metrics $g^{E^\bullet} = \bigoplus_k g^{E^k},{g^{E^\bullet}}' = \bigoplus_k {g^{E^k}}'$ on $E^\bullet$,
we denote
\begin{equation}
\label{eq-EE-grad}
\mathscr{T}\big(g^{E^\bullet},{g^{E^\bullet}}'\big) = \sum_k (-1)^k \mathscr{T}\big(g^{E^k},{g^{E^k}}'\big) \;.
\end{equation}
By Theorem \ref{thm-limtf}, \eqref{eq2-def-limtf}, \eqref{eqH4-pf-thm-transfer} and \eqref{eq-EE-grad},
we have
\begin{align}
\label{eqtf-pf-thm-transfer}
\begin{split}
& \mathscr{T}^{[>0]}\Big(g^{H^\bullet(Z,H^0(Y,q^*F))} \oplus g^{H^{\bullet-n}(Z,H^n(Y,q^*F))},g^{H^\bullet(X,q^*F)}\Big) \\
& = \big(1 + (-1)^n\big) \mathscr{T}\big(H^\bullet(Z,F),g^{H^\bullet(Z,F)}\big) - \mathscr{T}\big(H^\bullet(X,q^*F),g^{H^\bullet(X,q^*F)}\big) \\
& = \chi(\mathbb{S}^n)\mathscr{T}\big(H^\bullet(Z,F),g^{H^\bullet(Z,F)}\big) - \mathscr{T}\big(H^\bullet(X,q^*F),g^{H^\bullet(X,q^*F)}\big)
\hspace{2.5mm} \text{modulo } Q^{S,0} \;.
\end{split}
\end{align}

Recall that $T^HM \subseteq TM$ is a complement of $TZ$.
Let
\begin{equation}
\mathscr{T}\big(T^HM,g^{TZ},g^{H^\bullet(Y,q^*F)}\big) := \sum_{k=0}^n (-1)^k \mathscr{T}\big(T^HM,g^{TZ},g^{H^k(Y,q^*F)}\big) \in Q^S/Q^{S,0}
\end{equation}
be the Bismut-Lott analytic torsion form of $\big(\pi: M \rightarrow S, H^\bullet(Y,q^*F)\big)$.
Let $T^H\mathbb{S}^n(\xi) \subseteq T\mathbb{S}^n(\xi)$ be a complement of $TY$.
Let
\begin{equation}
\mathscr{T}\big(T^H\mathbb{S}^n(\xi),g^{TY},q^*g^F\big) \in Q^M/Q^{M,0}
\end{equation}
be the Bismut-Lott analytic torsion form of $\big(q: \mathbb{S}^n(\xi) \rightarrow M, q^*F\big)$.
Let $T^H_\mathrm{tot}\mathbb{S}^n(\xi) \subseteq T\mathbb{S}^n(\xi)$ be a complement of $TX$.
Let
\begin{equation}
\mathscr{T}\big(T^H_\mathrm{tot}\mathbb{S}^n(\xi),g^{TX},q^*g^F\big) \in Q^S/Q^{S,0}
\end{equation}
be the Bismut-Lott analytic torsion form of $\big(\pi \circ q: \mathbb{S}^n(\xi) \rightarrow S, q^*F\big)$.
By \cite[Thm. 0.1]{ma},
we have
\begin{align}
\label{eqM-pf-thm-transfer}
\begin{split}
& \mathscr{T}\big(T^H_\mathrm{tot}\mathbb{S}^n(\xi),g^{TX},q^*g^F\big) \\
& = \mathscr{T}\big(T^HM,g^{TZ},g^{H^\bullet(Y,q^*F)}\big) + \int_Z e(TZ) \mathscr{T}\big(T^H\mathbb{S}^n(\xi),g^{TY},q^*g^F\big) \\
& \hspace{5mm} + \mathscr{T}\Big(g^{H^\bullet(Z,H^0(Y,q^*F))} \oplus g^{H^{\bullet-n}(Z,H^n(Y,q^*F))},g^{H^\bullet(X,q^*F)}\Big)
\hspace{2.5mm} \text{modulo } Q^{S,0} \;.
\end{split}
\end{align}
Here we remark that $\mathscr{T}\big(T^H\mathbb{S}^n(\xi),g^{TY},q^*g^F\big)$ is closed.
This follows from \eqref{eq-datf} with $(\pi: M\rightarrow S, F)$ replaced by $\big(q: \mathbb{S}^n(\xi) \rightarrow M, q^*F\big)$.

By \eqref{eqH3-pf-thm-transfer} and the fact that $g^F$ is a flat Hermitian metric,
we have
\begin{equation}
\label{eqT1a-pf-thm-transfer}
\mathscr{T}\big(H^\bullet(Y,q^*F),g^{H^\bullet(Y,q^*F)}\big) = 0 \;.
\end{equation}
By Definition \ref{def-tclass}, \eqref{eq-def-tcl} and \eqref{eqT1a-pf-thm-transfer},
we have
\begin{equation}
\label{eqT1-pf-thm-transfer}
\tau^\mathrm{BL}\big(\mathbb{S}^n(\xi)/M,q^*F\big)
= \Big[ \mathscr{T}^{[>0]}\big(T^H\mathbb{S}^n(\xi),g^{TY},q^*g^F\big) \Big] \;.
\end{equation}
By Definition \ref{def-tclass} and \eqref{eq-def-tcl},
we have
\begin{align}
\label{eqT2-pf-thm-transfer}
\begin{split}
& \tau^\mathrm{BL}\big(\mathbb{S}^n(\xi)/S,q^*F\big) \\
& = \Big[ \mathscr{T}^{[>0]}\big(T^H_\mathrm{tot}\mathbb{S}^n(\xi),g^{TX},q^*g^F\big) + \mathscr{T}\big(H^\bullet(X,q^*F),g^{H^\bullet(X,q^*F)}\big) \Big] \;.
\end{split}
\end{align}
On the other hand,
by \eqref{eqH-pf-thm-transfer} and \eqref{eqH3-pf-thm-transfer},
we have
\begin{align}
\label{eqT3-pf-thm-transfer}
\begin{split}
& \mathscr{T}\big(T^HM,g^{TZ},g^{H^\bullet(Y,q^*F)}\big) \\
& = \big(1 + (-1)^n\big) \mathscr{T}\big(T^HM,g^{TZ},g^F\big)
= \chi(\mathbb{S}^n)\mathscr{T}\big(T^HM,g^{TZ},g^F\big) \;.
\end{split}
\end{align}
From Definition \ref{def-tclass}, \eqref{eq-def-tcl}, \eqref{eqtf-pf-thm-transfer} and \eqref{eqM-pf-thm-transfer}-\eqref{eqT3-pf-thm-transfer},
we obtain \eqref{eq-thm-transfer}.
This completes the proof.
\end{proof}

Now we assume that there is a finite covering
\begin{equation}
p: M \rightarrow M_\sim
\end{equation}
and a fibration
\begin{equation}
\pi_\sim: M_\sim \rightarrow S
\end{equation}
such that $\pi = \pi_\sim \circ p$.
Let $Z_\sim$ be the fiber of $\pi_\sim: M_\sim \rightarrow S$.
Let $\pi_*F$ be the direct image of $F$,
which is flat complex vector bundle over $M_\sim$.
The notations are summarized in the following commutative diagram,
\begin{equation}
\xymatrix{
F \ar[r] & M \ar[dr]^Z \ar[d] &  \\
p_*F \ar[r] & M_\sim \ar[r]_{Z_\sim} & S \;.}
\end{equation}

Let $g^{p_*F}$ be the Hermitian metric on $g^{p_*F}$ induced by $g^F$.
Since $g^F$ is a flat Hermitian metric,
so is $g^{p_*F}$.
The fiberwise cohomology $H^\bullet(Z_\sim,p_*F)$ is a graded flat complex vector bundle over $S$.
Moreover,
we have
\begin{equation}
\label{eqH-pf-thm-induction}
H^\bullet(Z,F) = H^\bullet(Z_\sim,p_*F) \;.
\end{equation}
Since $H^k(Z,F)$ is filtered by flat subbundles with unitary factors,
so is $H^k(Z_\sim,p_*F)$.
Hence
\begin{equation}
\tau^\mathrm{BL}\big(M_\sim/S,p_*F\big) \in H^{\mathrm{even}\geqslant 2}(S)
\end{equation}
is well-defined.

The following theorem is a direct consequence of \cite[Thm. 0.1]{ma}.

\begin{thm}
\label{thm-induction}
The following identity holds,
\begin{equation}
\label{eq-thm-induction}
\tau^\mathrm{BL}(M/S,F) =
\tau^\mathrm{BL}\big(M_\sim/S,p_*F\big) \;.
\end{equation}
\end{thm}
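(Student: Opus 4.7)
The plan is to apply Ma's composition formula \cite[Thm. 0.1]{ma} to the factorization $\pi = \pi_\sim \circ p$, mimicking the proof of Theorem \ref{thm-transfer}, and exploit the fact that the inner fiber $Y$ of the covering $p: M \to M_\sim$ is zero-dimensional.

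First I would fix the geometric data compatibly. Choose $T^HM_\sim \subseteq TM_\sim$ and a Riemannian metric $g^{TZ_\sim}$ for the outer fibration $\pi_\sim$, then lift both to $M$ via the \'etale map $p$, obtaining $T^HM \subseteq TM$ and $g^{TZ}$ for the composite fibration $\pi$. Equip $p_*F$ with the direct-sum Hermitian metric $g^{p_*F}$ induced from $g^F$; since $g^F$ is flat and $p$ is a local diffeomorphism, $g^{p_*F}$ is a flat Hermitian metric on the unitarily flat bundle $p_*F$. Because the fibers of $p$ are discrete, $H^0(Y, F|_Y) = p_*F$ and $H^k(Y, F|_Y) = 0$ for $k \geqslant 1$; moreover, one checks using $p^*\mathrm{dvol}_{Z_\sim} = \mathrm{dvol}_Z$ and the direct-sum convention for $g^{p_*F}$ that the $L^2$-metrics $g^{H^\bullet(Z, F)}$ and $g^{H^\bullet(Z_\sim, p_*F)}$ coincide under \eqref{eqH-pf-thm-induction}. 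Finally, inspection of Definition \ref{def-atf} applied to $p: M \to M_\sim$ shows that its Bismut-Lott analytic torsion form vanishes identically: with $\dim Y = 0$ one has $N^{TY} = 0$, $\chi'(Y, F) = 0$, and $\dim Y \cdot \chi(Y) \mathrm{rk}(F) = 0$, so the integrand in \eqref{eq-def-atf} is identically zero.

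Applying Ma's formula to the factorization $\pi = \pi_\sim \circ p$ exactly as in the proof of Theorem \ref{thm-transfer}, the analogue of the middle term $\int_{Z_\sim} e(TZ_\sim) \mathscr{T}(\cdots)$ vanishes by the preceding vanishing of the inner torsion form, and the analogue of the Leray correction term $\mathscr{T}\big(g^{H^\bullet(Z_\sim, p_*F)}, g^{H^\bullet(Z, F)}\big)$ vanishes because the two $L^2$-metrics coincide. This yields
\begin{equation*}
\mathscr{T}\big(T^HM, g^{TZ}, g^F\big) = \mathscr{T}\big(T^HM_\sim, g^{TZ_\sim}, g^{p_*F}\big) \hspace{2.5mm} \text{modulo } Q^{S,0}.
\end{equation*}
The filtrations of $H^\bullet(Z, F)$ and $H^\bullet(Z_\sim, p_*F)$ with unitary factors (together with any chosen flat Hermitian metrics on their graded pieces) are identified via \eqref{eqH-pf-thm-induction}, so $\mathscr{T}\big(H^\bullet(Z, F), g^{H^\bullet(Z, F)}\big) = \mathscr{T}\big(H^\bullet(Z_\sim, p_*F), g^{H^\bullet(Z_\sim, p_*F)}\big)$ in $Q^S/Q^{S,0}$. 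Substituting into \eqref{eq-def-tcl} and passing to cohomology via Definition \ref{def-tclass} gives \eqref{eq-thm-induction}.

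The main obstacle, though minor, is verifying that Ma's formula in \cite{ma}, typically stated for positive-dimensional fibers, extends cleanly to the zero-dimensional fiber case, and that the Leray identification implicit in the correction term of \cite{ma} is precisely the one that intertwines the two $L^2$-metrics after the direct-sum choice of $g^{p_*F}$; both points are straightforward but deserve an explicit check.
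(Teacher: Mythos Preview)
Your proof is correct and rests on the same main ingredient as the paper's --- Ma's composition formula applied to the factorization $\pi = \pi_\sim \circ p$, together with the observation that the inner analytic torsion form vanishes because the covering has zero-dimensional fibers. The one tactical difference is in how you handle the Leray correction term. You choose the metrics compatibly (lifting $g^{TZ_\sim}$ through $p$ and taking the direct-sum metric on $p_*F$), which forces the two $L^2$-metrics on $H^\bullet(Z,F) \cong H^\bullet(Z_\sim,p_*F)$ to coincide exactly; the correction term $\mathscr{T}\big(g^{H^\bullet(Z_\sim,p_*F)},g^{H^\bullet(Z,F)}\big)$ then vanishes on the nose, and the filtration terms $\mathscr{T}\big(H^\bullet(Z,F),\cdot\big)$ and $\mathscr{T}\big(H^\bullet(Z_\sim,p_*F),\cdot\big)$ are literally identical. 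The paper instead keeps $g^{TZ}$ and $g^{TZ_\sim}$ unrelated, so the correction term is nonzero, and then invokes Theorem \ref{thm-limtf} to identify it (in positive degree, modulo $Q^{S,0}$) with the difference $\mathscr{T}\big(H^\bullet(Z_\sim,p_*F),g^{H^\bullet(Z_\sim,p_*F)}\big) - \mathscr{T}\big(H^\bullet(Z,F),g^{H^\bullet(Z,F)}\big)$, which cancels after substitution into \eqref{eq-def-tcl}. Your route is a bit cleaner here since it avoids Theorem \ref{thm-limtf} altogether; the paper's route shows how the argument would go without the freedom to pick compatible metrics, and is closer in spirit to the proof of Theorem \ref{thm-transfer}, where such a shortcut is not available.
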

\begin{proof}
Let $g^{TZ_\sim}$ be a Riemannian metric on $TZ_\sim$.
Let $g^{H^\bullet(Z_\sim,p_*F)}$ be the $L^2$-metric on $H^\bullet(Z_\sim,p_*F)$ associated with $g^{TZ_\sim}, g^{p_*F}$.
Under the isomorphism \eqref{eqH-pf-thm-induction},
we may view $g^{H^\bullet(Z_\sim,p_*F)}$ as a metric on $g^{H^\bullet(Z,F)}$.
By Theorem \ref{thm-limtf}, \eqref{eq2-def-limtf} and \eqref{eq-EE-grad},
we have
\begin{align}
\label{eq1-pf-thm-induction}
\begin{split}
& \mathscr{T}^{[>0]}\big(g^{H^\bullet(Z_\sim,p_*F)},g^{H^\bullet(Z,F)}\big) \\
& = \mathscr{T}\big(H^\bullet(Z_\sim,p_*F),g^{H^\bullet(Z_\sim,p_*F)}\big) - \mathscr{T}\big(H^\bullet(Z,F),g^{H^\bullet(Z,F)}\big)
\hspace{2.5mm} \text{modulo } Q^{S,0} \;.
\end{split}
\end{align}

Let $T^HM_\sim \subseteq TM_\sim$ be a complement of $TZ_\sim$.
Let
\begin{equation}
\mathscr{T}^{[>0]}\big(T^HM_\sim,g^{TZ_\sim},g^{p_*F}\big) \in Q^S/Q^{S,0}
\end{equation}
be the Bismut-Lott analytic torsion form of $(\pi_\sim: M_\sim \rightarrow S, p_*F)$.
Since $g^F$ is a flat Hermitian metric and $p: M \rightarrow M_\sim$ is a finite covering,
the Bismut-Lott analytic torsion form of $(p: M \rightarrow M_\sim, F)$ vanishes.
Then,
by \cite[Thm. 0.1]{ma},
we have
\begin{align}
\label{eqM-pf-thm-induction}
\begin{split}
& \mathscr{T}\big(T^HM,g^{TZ},g^F\big) \\
& = \mathscr{T}\big(T^HM_\sim,g^{TZ_\sim},g^{p_*F}\big)
+ \mathscr{T}\big(g^{H^\bullet(Z_\sim,p_*F)},g^{H^\bullet(Z,F)}\big)
\hspace{2.5mm} \text{modulo } Q^{S,0} \;.
\end{split}
\end{align}

By Definition \ref{def-tclass} and \eqref{eq-def-tcl},
we have
\begin{align}
\label{eq2-pf-thm-induction}
\begin{split}
& \tau^\mathrm{BL}\big(M_\sim/S,p_*F\big) \\
& = \Big[ \mathscr{T}^{[>0]}\big(T^HM_\sim,g^{TZ_\sim},g^{p_*F}\big) + \mathscr{T}\big(H^\bullet(Z_\sim,p_*F),g^{H^\bullet(Z_\sim,p_*F)}\big) \Big] \;.
\end{split}
\end{align}
From Definition \ref{def-tclass}, \eqref{eq-def-tcl}, \eqref{eq1-pf-thm-induction}, \eqref{eqM-pf-thm-induction} and \eqref{eq2-pf-thm-induction},
we obtain \eqref{eq-thm-induction}.
This completes the proof.
\end{proof}

\subsection{Triviality}
In this subsection,
we use the notations in \textsection \ref{subsect-con}.

For a positive integer $k$,
let
\begin{equation}
\tau^\mathrm{BL}_k(M/S,F) \in H^{2k}(S)
\end{equation}
be the component of $\tau^\mathrm{BL}(M/S,F)$ of degree $2k$.

The following theorem is due to Bismut and Lott \cite{bl}.

\begin{thm}
\label{thm-trivial}
If $(F,\n^F)$ is a trivial flat complex line bundle over $M$,
then
\begin{equation}
\tau^\mathrm{BL}_{2k+1}(M/S,F) = 0 \hspace{2.5mm} \text{for } k\in\N \;.
\end{equation}
\end{thm}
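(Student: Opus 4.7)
The plan is to exploit the real structure present when $F$ is trivial and reduce the vanishing to the two ``$\equiv 2\pmod 4$'' vanishing results already proved, namely Theorem \ref{thm-atf-4kplus2} and Theorem \ref{thm-limtf-4kplus2}. Since the relevant component $\tau^\mathrm{BL}_{2k+1}(M/S,F)$ lives in $H^{4k+2}(S)$, and $4k+2\equiv 2\pmod 4$, the degree is exactly the one where those theorems produce zero.

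Concretely, I would choose $g^F$ to be the standard flat Hermitian metric coming from viewing $(F,\n^F,g^F)$ as the complexification of the trivial flat real line bundle $(F_\R,\n^{F_\R},g^{F_\R}) = (\mathbb{R},d,|\cdot|)$ pulled back to $M$. Under this choice, $(F,\n^F,g^F)$ is by construction the complexification of a real triple, so Theorem \ref{thm-atf-4kplus2} gives
\begin{equation*}
\mathscr{T}^{[4k+2]}\bigl(T^HM,g^{TZ},g^F\bigr) = 0 \;.
\end{equation*}
On the cohomology side, the fiberwise cohomology $H^\bullet(Z,F) = H^\bullet(Z,\mathbb{R})\otimes_\R \C$ inherits a real structure from $F_\R$, its canonical flat connection $\n^{H^\bullet(Z,F)}$ of \cite[Def.~2.4]{bl} is the complexification of the canonical real flat connection, and the $L^2$-metric $g^{H^\bullet(Z,F)}$ induced by the real metrics $g^{TZ},g^F$ is the complexification of the analogous real $L^2$-metric. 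Hence the triple $(H^\bullet(Z,F),\n^{H^\bullet(Z,F)},g^{H^\bullet(Z,F)})$ is the complexification of a real triple, and Theorem \ref{thm-limtf-4kplus2} yields
\begin{equation*}
\mathscr{T}^{[4k+2]}\bigl(H^\bullet(Z,F),g^{H^\bullet(Z,F)}\bigr) = 0
\end{equation*}
in $Q^S/Q^{S,0}$.

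Combining these via Definition \ref{def-tclass} and the formula \eqref{eq-def-tcl} for $\mathscr{T}_\mathrm{cl}$, together with the identification \eqref{eq2-def-limtf}, both summands of the degree $4k+2$ component of $\mathscr{T}_\mathrm{cl}(T^HM,g^{TZ},g^F)$ vanish, which proves the theorem. The only minor verification needed is that the various constructions (the $L^2$-metric, the canonical flat connection, the filtrations by subbundles with unitary factors used in Definition \ref{def-limtf}) are compatible with complex conjugation; this is not really an obstacle since Theorem \ref{thm-limtf-4kplus2} was stated precisely to absorb this — its proof uses the conjugated filtration rather than requiring a real filtration from the outset. Thus no genuine difficulty arises, and the argument reduces to cleanly invoking the two previously established vanishings.
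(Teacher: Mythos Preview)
Your proposal is correct and follows exactly the paper's own argument: the paper's proof is the single line ``This is a direct consequence of Theorems \ref{thm-limtf-4kplus2}, \ref{thm-atf-4kplus2}, Definition \ref{def-tclass} and \eqref{eq-def-tcl},'' and you have simply unpacked this by spelling out why the complexification hypotheses of those two theorems are met when $F$ is the trivial line bundle with its standard real structure.
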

\begin{proof}
This is a direct consequence of Theorems \ref{thm-limtf-4kplus2}, \ref{thm-atf-4kplus2}, Definition \ref{def-tclass} and \eqref{eq-def-tcl}.
\end{proof}

\subsection{Circle bundle}
\label{subsect-circle}

Let $n$ be a positive integer.
Let $L$ be a complex line bundle over $S$.
Let $L^{\otimes n}$ be $n$-th tensor power of $L$.
Let
\begin{equation}
\pi_n: \mathbb{S}^1(L^{\otimes n}) \rightarrow S
\end{equation}
be the $\mathbb{S}^1$-bundle associated with $L^{\otimes n}$.
Let
\begin{equation}
q_n: \mathbb{S}^1(L) \rightarrow \mathbb{S}^1(L^{\otimes n})
\end{equation}
be the $n$-covering induced by $L \rightarrow L^{\otimes n}, s \mapsto s^{\otimes n}$.
Let $\alpha\in\C^*$ such that $\alpha^n=1$.
Let $\big(F_{\alpha},\n^{F_\alpha}\big)$ be the unique flat complex line bundle over $\mathbb{S}^1(L^{\otimes n})$ such that
\begin{itemize}
\item[-] The pull-back $q_n^*\big(F_{\alpha},\n^{F_\alpha}\big)$ is a trivial flat complex line bundle over $\mathbb{S}^1(L)$.
\item[-] The holonomy of $\big(F_{\alpha},\n^{F_\alpha}\big)$ along the fiber of $\pi_n: \mathbb{S}^1(L^{\otimes n}) \rightarrow S$ equals $\alpha$.
\end{itemize}

For $k$ an integer greater than $1$,
let $\mathrm{Li}_k$ be the polylogarithm function
\begin{align}
\begin{split}
\mathrm{Li}_k:  \big\{ z\in\C \;:\; |z| \leqslant 1 \big\} & \rightarrow \C \\
z & \mapsto \sum_{m=1}^\infty m^{-k}z^m \;.
\end{split}
\end{align}

Let $\omega \in H^2(S)$ be the first Chern class of $L^{\otimes n}$.

The following theorem is due to Bismut and Lott \cite{bl}.

\begin{thm}
\label{thm-circle}
The following identity holds,
\begin{align}
\label{eq-thm-circle}
\begin{split}
\tau^\mathrm{BL}\big(\mathbb{S}^1(L^{\otimes n})/S,F_\alpha\big)
& = \sum_{k>0 \; \mathrm{even}} (-1)^{k/2} \frac{(2k+1)!}{2^{2k}(k!)^2 (2\pi)^k} \mathrm{Re}\big(\mathrm{Li}_{k+1}(\alpha)\big) \omega^k \\
& \hspace{2.5mm} + \sum_{k>0 \; \mathrm{odd}} (-1)^{(k-1)/2} \frac{(2k+1)!}{2^{2k}(k!)^2(2\pi)^k} \mathrm{Im}\big(\mathrm{Li}_{k+1}(\alpha)\big) \omega^k \;.
\end{split}
\end{align}
\end{thm}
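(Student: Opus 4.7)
The plan is to reduce to a universal situation and invoke the explicit computation of the Bismut-Lott analytic torsion form for $\mathbb{S}^1$-bundles carried out in \cite{bl}. By the naturality of $\tau^{\mathrm{BL}}$ built into Definition \ref{def-tclass}, it is enough to check \eqref{eq-thm-circle} for the universal pair, i.e. for $S$ a finite-dimensional approximation of $\CP^\infty$ and $L$ the restriction of the tautological line bundle. This freedom allows us to pick geometric data adapted to the $\mathbb{S}^1$-symmetry: a principal $\mathbb{S}^1$-connection on $L^{\otimes n}$ with curvature representing $\omega$ (up to the usual $2\pi i$ normalization), inducing a horizontal subbundle $T^HM \subset TM$ on $M=\mathbb{S}^1(L^{\otimes n})$; a rotation-invariant metric $g^{TZ}$ on each fiber $\mathbb{S}^1$; and a flat Hermitian metric $g^{F_\alpha}$ on $F_\alpha$.

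Next I would compute the fiberwise Hodge Laplacian. Writing $\alpha=e^{2\pi i\beta}$ with $\beta\in\R/\Z$, sections of $F_\alpha$ over a fiber $\mathbb{S}^1$ correspond to twisted functions on $\R$ with monodromy $\alpha$, so the spectrum of the fiberwise scalar Laplacian is $\{(2\pi(m+\beta))^2: m\in\Z\}$, with explicit Fourier eigenmodes. Plugging this into Definition \ref{def-atf}, the superconnection Laplacian $D_t^2$ from \eqref{eq-Dt2} has explicit matrix entries in the Fourier basis expressed in terms of $\omega$ and the fiberwise eigenvalues, because the $\mathbb{S}^1$-equivariance reduces the horizontal derivatives of eigenforms to multiplication by $\omega$.

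The computational core is then a direct Mellin-transform calculation. Since the fiber is one-dimensional, the integrand in \eqref{eq-def-atf} decomposes as a sum over $m\in\Z$ of elementary Gaussian-type terms, and the Mellin transform of Definition \ref{def-atf} yields, as the coefficient of $\omega^k$, a derivative at $s=0$ of a Hurwitz-type zeta function $\sum_{m\in\Z}\bigl((m+\beta)^2\bigr)^{-s}$ times an explicit rational constant coming from the normalization in \eqref{eq-def-varphi} and from $f(z)=ze^{z^2}$. The classical identity
\begin{equation}
\sum_{m\in\Z, \, m+\beta\neq 0} \frac{e^{2\pi i (m+\beta)x}}{(m+\beta)^{k+1}}
\end{equation}
evaluated at $x=0$, together with the standard relation expressing such derivatives in terms of polylogarithms, converts these Hurwitz-zeta values into $\mathrm{Re}(\mathrm{Li}_{k+1}(\alpha))$ when $k$ is even and $\mathrm{Im}(\mathrm{Li}_{k+1}(\alpha))$ when $k$ is odd, producing exactly the prefactor $\tfrac{(2k+1)!}{2^{2k}(k!)^2(2\pi)^k}$.

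Finally I would dispose of the cohomology correction in Definition \ref{def-tclass}. If $\alpha\neq 1$ then $H^\bullet(Z,F_\alpha)=0$ and the correction in \eqref{eq-def-tcl} vanishes identically. If $\alpha=1$ then $F_\alpha=\mathbb{1}$, and under the rotation-invariant data the induced $L^2$-metric on $H^\bullet(Z,\mathbb{1})$ is flat, so $\mathscr{T}\bigl(H^\bullet(Z,\mathbb{1}), g^{H^\bullet(Z,\mathbb{1})}\bigr)$ is a locally constant function on $S$ and contributes nothing in $H^{\mathrm{even}\geqslant 2}(S)$. Extracting the positive-degree components of $\mathscr{T}(T^HM,g^{TZ},g^{F_\alpha})$ then yields \eqref{eq-thm-circle}. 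The main obstacle is bookkeeping in the Mellin-transform step: matching the normalization constants and the real/imaginary splitting of $\mathrm{Li}_{k+1}(\alpha)$ against the parity of $k$ is delicate, but it is an entirely concrete spectral calculation already executed in \cite{bl}, and our role here is simply to repackage that calculation into the language of the torsion class.
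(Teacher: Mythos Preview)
Your proposal is correct and takes essentially the same approach as the paper: both reduce immediately to the explicit computation of the Bismut--Lott analytic torsion form for $\mathbb{S}^1$-bundles carried out in \cite[Cor.~4.14]{bl}, and both observe that the cohomology correction in Definition~\ref{def-tclass} contributes nothing in positive degree. The paper's proof is three lines long, simply citing \cite[Cor.~4.14]{bl} for $\alpha\neq 1$ and remarking that the same argument goes through for $\alpha=1$ since only positive-degree components matter; your write-up is more expansive, sketching the spectral content of that computation and spelling out why the correction term vanishes (flat $L^2$-metric on $H^\bullet(Z,\mathbb{1})$ under rotation-invariant data), but the underlying argument is identical.
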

\begin{proof}
For $\alpha \neq 1$,
the identity \eqref{eq-thm-circle} is a direct consequence of \cite[Cor. 4.14]{bl}.
Since \eqref{eq-thm-circle} concerns the analytic torsion forms of positive degree,
the proof of \cite[Cor. 4.14]{bl} equally implies \eqref{eq-thm-circle} for $\alpha = 1$.
\end{proof}

The following theorem is due to Igusa and Klein \cite{ig4} (see also \cite[Thm. 3.1]{ig3}).

\begin{thm}
\label{thm2-circle}
The following identity holds,
\begin{align}
\begin{split}
\tau^\mathrm{IK}\big(\mathbb{S}^1(L^{\otimes n})/S,F_\alpha\big)
& = \sum_{k \; \mathrm{even}} (-1)^{(k+2)/2} \frac{1}{k!} \mathrm{Re}\big(\mathrm{Li}_{k+1}(\alpha)\big) \omega^k \\
& \hspace{2.5mm} + \sum_{k \; \mathrm{odd}} (-1)^{(k+1)/2} \frac{1}{k!} \mathrm{Im}\big(\mathrm{Li}_{k+1}(\alpha)\big) \omega^k \;.
\end{split}
\end{align}
\end{thm}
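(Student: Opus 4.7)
The plan is to appeal directly to the calculation of Igusa-Klein in \cite{ig4} (see also \cite[Thm. 3.1]{ig3}); I do not propose an independent proof, since the formula in the statement is precisely their computation. The task reduces to two things: reducing to a universal model, and matching normalization conventions.

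First, by Axiom 1 (naturality) of IK torsion and the universality of the line bundle $L$ on $\CP^\infty$, it suffices to verify the formula in the universal case where $S = \CP^\infty$ and $L$ is the tautological line bundle. The general formula then follows by pulling back along the classifying map of $L^{\otimes n}$; in particular, by the multiplicativity of the first Chern class, the substitution $\omega = c_1(L^{\otimes n})$ is compatible with the tensor-power structure underlying the covering $q_n$ of \textsection\ref{subsect-circle}.

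Second, on the universal base one invokes the Igusa-Klein formula, which expresses $\tau^\mathrm{IK}\big(\mathbb{S}^1(L^{\otimes n})/\CP^\infty, F_\alpha\big)$ as a sum involving the polylogarithms $\mathrm{Li}_{k+1}(\alpha)$ paired against $\omega^k$. The splitting into the real part for even $k$ and the imaginary part for odd $k$ is forced by the reality of $\tau^\mathrm{IK}$ together with the fact that $\omega$ is the first Chern class of a complex line bundle, so that complex conjugation of the monodromy $\alpha \mapsto \bar\alpha$ acts on each $\omega^k$-coefficient with a sign dictated by the parity of $k$.

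The main, and essentially only, obstacle is bookkeeping: one must check that the constants $(-1)^{(k+2)/2}/k!$ and $(-1)^{(k+1)/2}/k!$ appearing in our formulation agree with those in \cite{ig4,ig3} after accounting for the normalization of IK torsion used in the present paper. This is a routine but careful comparison of conventions; no new analytic or topological input is required beyond the cited works. Once the conventions are matched, the formula follows by direct substitution.
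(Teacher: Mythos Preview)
Your proposal is correct and matches the paper's treatment: the paper does not prove this theorem at all but simply attributes it to Igusa and Klein \cite{ig4} (see also \cite[Thm.~3.1]{ig3}), exactly as you do. Your additional remarks on reduction to the universal base via naturality and on convention-matching are reasonable elaborations, but the paper itself offers none of this and relies entirely on the citation.
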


\subsection{Proofs of Theorems \ref{thm-bg}', \ref{thm-bg2}}

\begin{proof}[Proof of Theorem \ref{thm-bg}']
Recall that Axioms 1-7 were stated in \textsection \ref{subsect-axiom}.
Axiom 1 trivially holds for $\tau^\mathrm{BL}$.
By Theorem \ref{thm-gluing},
Axiom 2 holds for $\tau^\mathrm{BL}$.
By Theorem \ref{thm-transfer},
Axiom 3 holds for $\tau^\mathrm{BL}$.
Hence $\tau^\mathrm{BL}$ satisfies Igusa's axiomatization \cite[\textsection 3]{ig2}.

By \cite[Cor. 4.5]{ig2},
there exists $a_{2k}\in\R$ such that
for any $M/S$ with odd-dimensional fiber,
we have
\begin{equation}
\frac{2^{4k}\big((2k)!\big)^2}{(4k+1)!} \tau^\mathrm{BL}_{2k}(M/S,\mathbb{1}) = a_{2k} \tau^\mathrm{IK}_{2k}(M/S,\mathbb{1}) \;.
\end{equation}
Now let $M/S$ be a circle bundle.
From Theorems \ref{thm-circle}, \ref{thm2-circle},
we get
\begin{equation}
a_{2k} = - \frac{(2k)!}{(2\pi)^{2k}} \;.
\end{equation}

By \cite[Cor. 4.5]{ig2},
there exists $b_{2k}\in\R$ such that
for any $M/S$ with even-dimensional fiber $Z$,
we have
\begin{equation}
\tau^\mathrm{BL}_{2k}(M/S,\mathbb{1}) = b_{2k} \Big[\int_Z e(TZ)\mathrm{ch}(TZ) \Big]^{[4k]} \in H^{4k}(S) \;.
\end{equation}
Now let $M/S$ be a $\mathbb{S}^2$-bundle.
Let $TZ$ be a Riemannian metric on $TZ$ such that the volume of any fiber of $M/S$ equals $1$.
Let $g^\mathbb{1}$ be the canonical metric on $\mathbb{1}$.
Then the $L^2$-metric on $H^\bullet(Z)$ is flat.
By Definition \ref{def-tclass},
we have
\begin{equation}
\tau^\mathrm{BL}_{2k}(M/S,\mathbb{1}) = \Big[ \mathscr{T}\big(T^HM,g^{TZ},g^\mathbb{1}\big) \Big]^{[4k]} \in H^{4k}(S) \;.
\end{equation}
On the other hand,
by \cite[Thm. 3.26]{bl},
we have
\begin{equation}
\mathscr{T}\big(T^HM,g^{TZ},g^\mathbb{1}\big) = 0 \;.
\end{equation}
Hence $\tau^\mathrm{BL}_{2k}(M/S,\mathbb{1}) = 0$ and $b_{2k} = 0$.
This completes the proof.
\end{proof}

\begin{proof}[Proof of Theorem \ref{thm-bg2}]
In the proof of Theorem \ref{thm-bg}',
we showed that Axioms 1-3 hold for $\tau^\mathrm{BL}$.
By Theorem \ref{thm-trivial},
Axiom 4 holds for $\tau^\mathrm{BL}$.
Axiom 5 trivially holds for $\tau^\mathrm{BL}$.
By Theorem \ref{thm-induction},
Axiom 6 holds for $\tau^\mathrm{BL}$.
By Theorem \ref{thm-circle},
Axiom 7 holds for $\tau^\mathrm{BL}$.
Hence $\tau^\mathrm{BL}$ satisfies Ohrt's axiomatization \cite[\textsection 2.2]{ohrt}.

By \cite[Theorem 0.1]{ohrt},
there exist $a_{2k},b_{2k}\in\R$ such that for any $(M/S,F)$ under consideration,
we have
\begin{equation}
\label{eq0-pf-thm-bg2}
\frac{2^{4k}\big((2k)!\big)^2}{(4k+1)!} \tau^\mathrm{BL}_{2k}(M/S,F)
= a_{2k} \tau^\mathrm{IK}_{2k}(M/S,F) + b_{2k} \mathrm{rk}F \Big[\int_Z e(TZ)\mathrm{ch}(TZ)\Big]^{[4k]} \;.
\end{equation}
We view \eqref{eq0-pf-thm-bg2} as a system of equations of $a_{2k},b_{2k}$.
Taking $F=\mathbb{1}$,
we get
\begin{equation}
\label{eq1-pf-thm-bg2}
\frac{2^{4k}\big((2k)!\big)^2}{(4k+1)!} \tau^\mathrm{BL}_{2k}(M/S,\mathbb{1})
= a_{2k} \tau^\mathrm{IK}_{2k}(M/S,\mathbb{1}) + b_{2k} \Big[\int_Z e(TZ)\mathrm{ch}(TZ)\Big]^{[4k]} \;.
\end{equation}
Comparing Theorem \ref{thm-bg} with \eqref{eq1-pf-thm-bg2},
we know that
\begin{equation}
\label{eq2-pf-thm-bg2}
a_{2k} = - \frac{(2k)!}{(2\pi)^{2k}} \;,\hspace{5mm} b_{2k} = \frac{\zeta'(-2k)}{2}
\end{equation}
is a solution of \eqref{eq1-pf-thm-bg2}.
On the other hand,
by \cite[Prop. 4.6, 4.7]{ig2},
the solution of \eqref{eq1-pf-thm-bg2} is unique.
Hence \eqref{eq2-pf-thm-bg2} is the unique solution of \eqref{eq0-pf-thm-bg2}.

By \cite[Theorem 0.1]{ohrt},
there exist $c_{2k+1}\in\R$ such that for any $(M/S,F)$ under consideration,
we have
\begin{equation}
\frac{2^{4k+2}\big((2k+1)!\big)^2}{(4k+3)!} \tau^\mathrm{BL}_{2k+1}(M/S,F)
= c_{2k+1} \tau^\mathrm{IK}_{2k+1}(M/S,F) \;.
\end{equation}
Now let $M/S$ be a circle bundle.
From Theorems \ref{thm-circle}, \ref{thm2-circle},
we get
\begin{equation}
c_{2k+1} = - \frac{(2k+1)!}{(2\pi)^{2k+1}} \;.
\end{equation}
This completes the proof.
\end{proof}

\def\cprime{$'$}
\providecommand{\bysame}{\leavevmode\hbox to3em{\hrulefill}\thinspace}
\providecommand{\MR}{\relax\ifhmode\unskip\space\fi MR }
\providecommand{\MRhref}[2]{%
  \href{http://www.ams.org/mathscinet-getitem?mr=#1}{#2}
}
\providecommand{\href}[2]{#2}


\end{document}